\definecolor{webgreen}{rgb}{0,.5,0}
\definecolor{webbrown}{rgb}{.6,0,0}
\newtheorem{theorem}{Theorem}[section]
\newtheorem{lemma}[theorem]{Lemma}
\newtheorem{corollary}[theorem]{Corollary}
\newtheorem{question}[theorem]{Question}
\theoremstyle{definition}
\newtheorem{example}[theorem]{Example}
\newtheorem{remark}[theorem]{Remark}
\newcommand{\gen}[1]{\langle #1 \rangle}
\newcommand{\unone}[1]{#1\cup\{1\}}
\newcommand{\Z}{\mathbb{Z}}
\newcommand{\C}{\mathbb{C}}
\newcommand{\Q}{\mathbb{Q}}
\newcommand{\inv}[1]{#1^{-1}}
\newcommand{\CT}{\mathrm{C}}
\newcommand{\invCT}{\mathrm{K}}
\newcommand{\drawCyclic}[2]
{
\begin{tikzpicture}
\def\n{#1};
\def\r{#2};
\def\minsz{27pt};
\def\sc{0.5};
\foreach \i in {1,2,...,\n} {
	\ifthenelse{\i = \n}
	{
		\node[circle,draw,fill=green,minimum size = \minsz,scale=\sc] (pos\i)  at ({\r*cos(\i*360/\n)}, {\r*sin(\i*360/\n)}) {$1$};
	}
	{
	\ifthenelse{\i = 1}
		{
		\node[circle,draw,fill=green,minimum size = \minsz,scale=\sc] (pos\i)  at ({\r*cos(\i*360/\n)}, {\r*sin(\i*360/\n)}) {$x$};
		}
		{
		\node[circle,draw,fill=green,minimum size = \minsz,scale=\sc] (pos\i)  at ({\r*cos(\i*360/\n)}, {\r*sin(\i*360/\n)}) {$x^{\i}$};
		}
	}
}
\foreach \i [evaluate=\i as \ione using {int(mod(\i,\n)+1)}]  in {1,2,...,\n} {
	\draw[->,blue] (pos\i) -- (pos\ione);
}
\end{tikzpicture}
}
\newcommand{\mcl}{\mathcal{L}}
\newcommand{\Mat}{\mathbb{M}}
\begin{document}

\author{Jason Bell}
\address[J. Bell]{Department of Pure Mathematics \\
University of Waterloo\\
Waterloo, ON  N2L 3G1 \\
Canada}
\email[J. Bell]{jpbell@uwaterloo.ca}
\thanks{Research of the first-named author supported by NSERC Grant RGPIN-2016-03632.}

\author{Haggai Liu}
\address[H. Liu]{Department of Mathematics\\
Simon Fraser University\\
Burnaby, BC  V5A 1S6 \\
Canada}
\email[H. Liu]{haggail@sfu.ca}

\author{Marni Mishna}
\address[M. Mishna]{Department of Mathematics\\
Simon Fraser University\\
Burnaby, BC  V5A 1S6 \\
Canada}
\email[M. Mishna]{mmishna@sfu.ca}
\thanks{Research of the third-named author supported by NSERC Grant RGPIN-2017-04157}

\title{Cogrowth series for free products of finite groups}

\begin{abstract}
  Given a finitely generated group with generating set~$S$, we study
  the \emph{cogrowth} sequence, which is the number of words of
  length $n$ over the alphabet $S$ that are equal to one.  This is
  related to the probability of return for walks the corresponding
  Cayley graph.  Muller and Schupp proved the generating function of
  the sequence is algebraic when $G$ has a finite-index free subgroup
  (using a result of Dunwoody). In this work we make this
  result effective for free products of finite groups: we determine bounds for the degree and height of
  the minimal polynomial of the generating function, and determine the
  minimal polynomial explicitly for some families of free products.  Using these results we
  are able to prove that a gap theorem holds: if~$S$ is a finite
  symmetric generating set for a group~$G$ and if~$a_n$ denotes the
  number of words of length $n$ over the alphabet~$S$ that are equal
  to~$1$ then $\limsup_n a_n^{1/n}$ exists and is either $1$, $2$, or at least $2\sqrt{2}$.
\end{abstract}

\maketitle
\tableofcontents

%------------------------------------------------------
\section{Introduction}
\label{sec:Intro}
% ------------------------------------------------------
 Given a group~$G$ with finite generating set~$S$ the
cogrowth problem considers elements in the free monoid, $S^*$, on $S$ whose images in $G$ are
equal to the identity; this is a sublanguage of $S^*$. We denote the
set of these elements by 
$\mathcal{L}(G;S)$, or simply $\mathcal{L}$ when the context is clear. For many classes of groups the word problem is decidable
(i.e., there exists a decision procedure for determining if a word on
a set of generators is equal to the identity). This includes free
groups, one-relator groups, polycyclic groups and fundamental groups
of closed orientable two-manifolds of genus greater than one. On the
other hand, there exist groups with unsolvable word problem, with the
first such example being given by Novikov~\cite{novikov}. 

%The
%complexity of the language is related to the complexity of the group:
%$G$ is
%finite if and only if $\mathcal{L}(G;S)$ is a regular
%language (those
%recognized using finite-state automaton) ~\cite{anisimov}. Remarkably
%one can show that~$\mathcal{L}$ is a context-free language (recognized
%by a push-down automaton) if and only if~$G$ has a finite-index free subgroup (see
%Muller and Schupp~\cite{muller} with a missing ingredient supplied by
%work of Dunwoody~\cite{Dunwoody}). 

Here we consider an enumerative version of this problem, in which we are interested in
the number of words of a given length in $\mathcal{L}$. This is called
the \emph{cogrowth function} and we denote the number of words of length $n$
by  ${\rm CL}(n;G,S)$. The generating function of this counting
sequence,  $F_{G;S}(t)$, is called the
corresponding cogrowth series and is defined \begin{equation}
F(t)= F_{G;S}(t) :=
  \sum_{n\ge0} {\rm CL}(n;G,S)t^n.\end{equation}
  
This enumeration problem gives rise to three different notions of complexity. First, there is the complexity of the language $\mathcal{L}(G,S)$, which can be understood using the Chomsky-Sch\"utzenberger hierarchy of grammars. Next there is the complexity of the associated generating series, where we regard the class of rational power series as being simplest, with algebraic power series, $D$-finite series, and differentially algebraic series being viewed as increasingly complex classes of power series. Finally, there is the complexity of the group itself, which is typically understood in terms of desirable group theoretic properties.

When one works at the very simplest level, the three notions of complexity coincide: the language $\mathcal{L}(G,S)$ is a regular language if and only if $G$ is finite~\cite{anisimov} and this occurs if and only if $F_{G,S}(t)$ is rational.  Beyond this there are other well known connections: the generating function of an unambiguous context-free language satisfies a polynomial equation and hence is an
algebraic power series~\cite[Chapter III]{kuich}, and a result of Muller and Schupp \cite{muller} gives that this occurs precisely when $G$ has a finite-index free subgroup (i.e., $G$ is virtually free). 
Although, virtually
free groups are in some respect complex (e.g., they are non-amenable except in the cyclic-by-finite case), the word problem is relatively straightforward for this class of groups and thus in the sense of complexity of the word problem the class of virtually free groups is ``simple''. 

While there is a strong overlap between the various notions of
complexity (group theoretic, language theoretic, and complexity of
power series), there are nevertheless some families of groups which
are typically regarded as being structurally well behaved whose
corresponding cogrowth series are complex according to our notion of
complexity.  For example, it is shown in ~\cite{BeMi20} that a
finitely generated amenable group that is not virtually nilpotent can
never have a generating series that satisfies a non-trivial
homogeneous linear differential equation with rational function
coefficients.

In this work we focus on groups that are virtually free; specifically these are groups that have a finite-index free
subgroup. As remarked earlier, the cogrowth series are algebraic in this case, and in many cases it is useful to understand the polynomial equation that these generating functions
satisfy. 
\begin{figure}%[H]
%\centering
	\begin{subfigure}{.33\textwidth}
	\centering
	\includegraphics[width=0.5\linewidth]{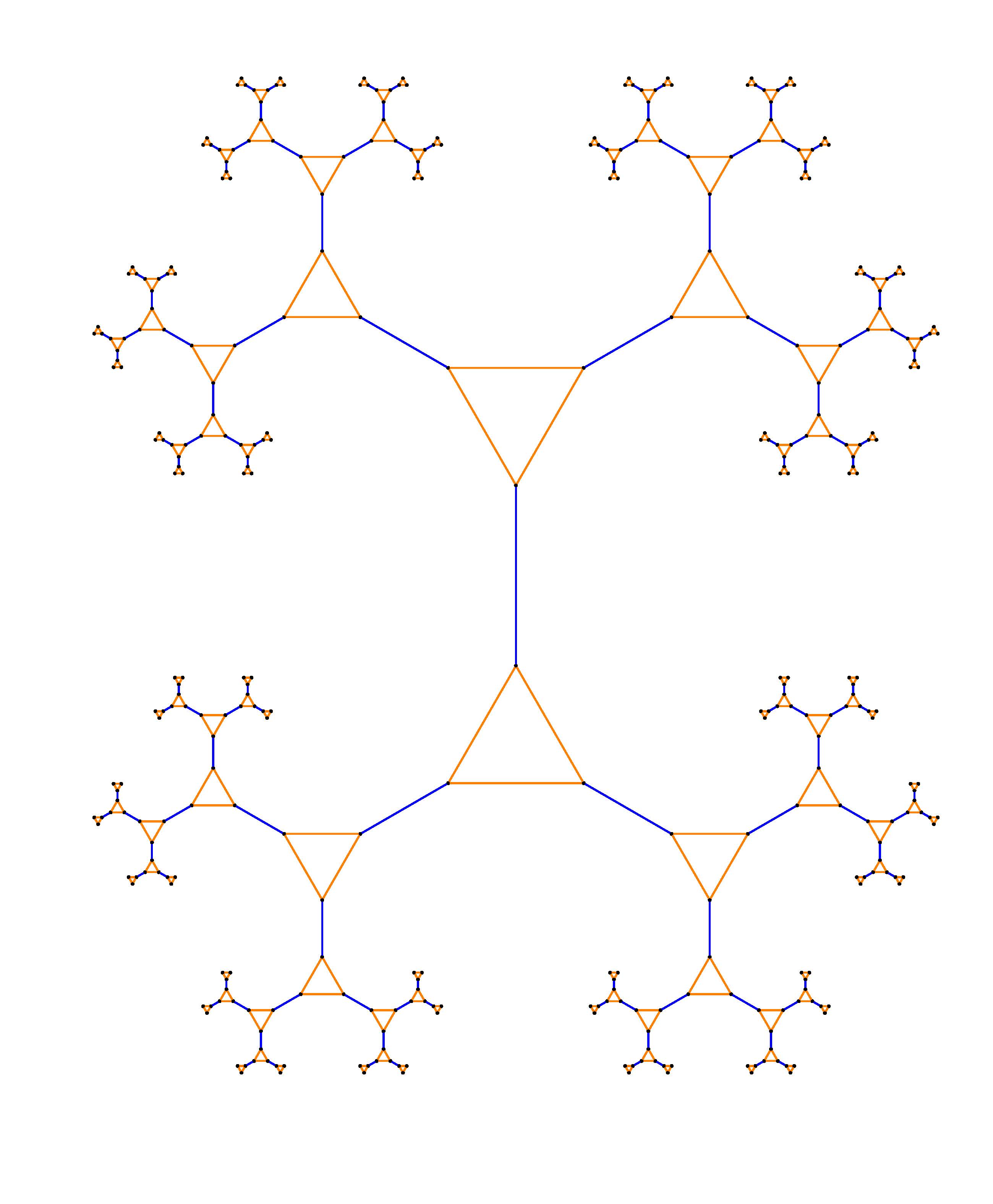}
	\caption{$m=2,n=3$}
	\end{subfigure}%
	\begin{subfigure}{.33\textwidth}
	\centering
	\includegraphics[width=0.7\linewidth]{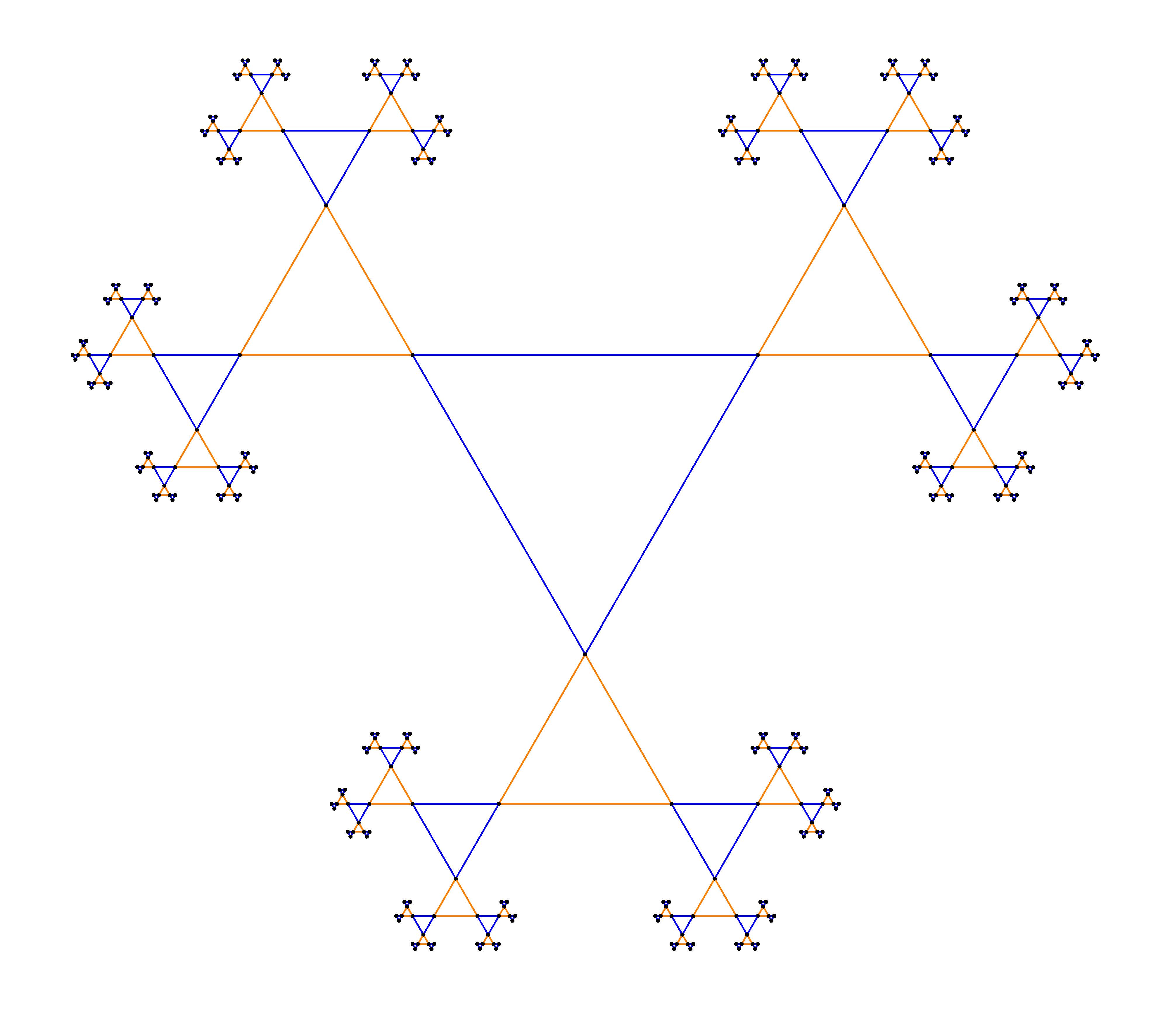}
	\caption{$m=n=3$}
	\end{subfigure}%
	\begin{subfigure}{.33\textwidth}
	\centering
	\includegraphics[width=0.63\linewidth]{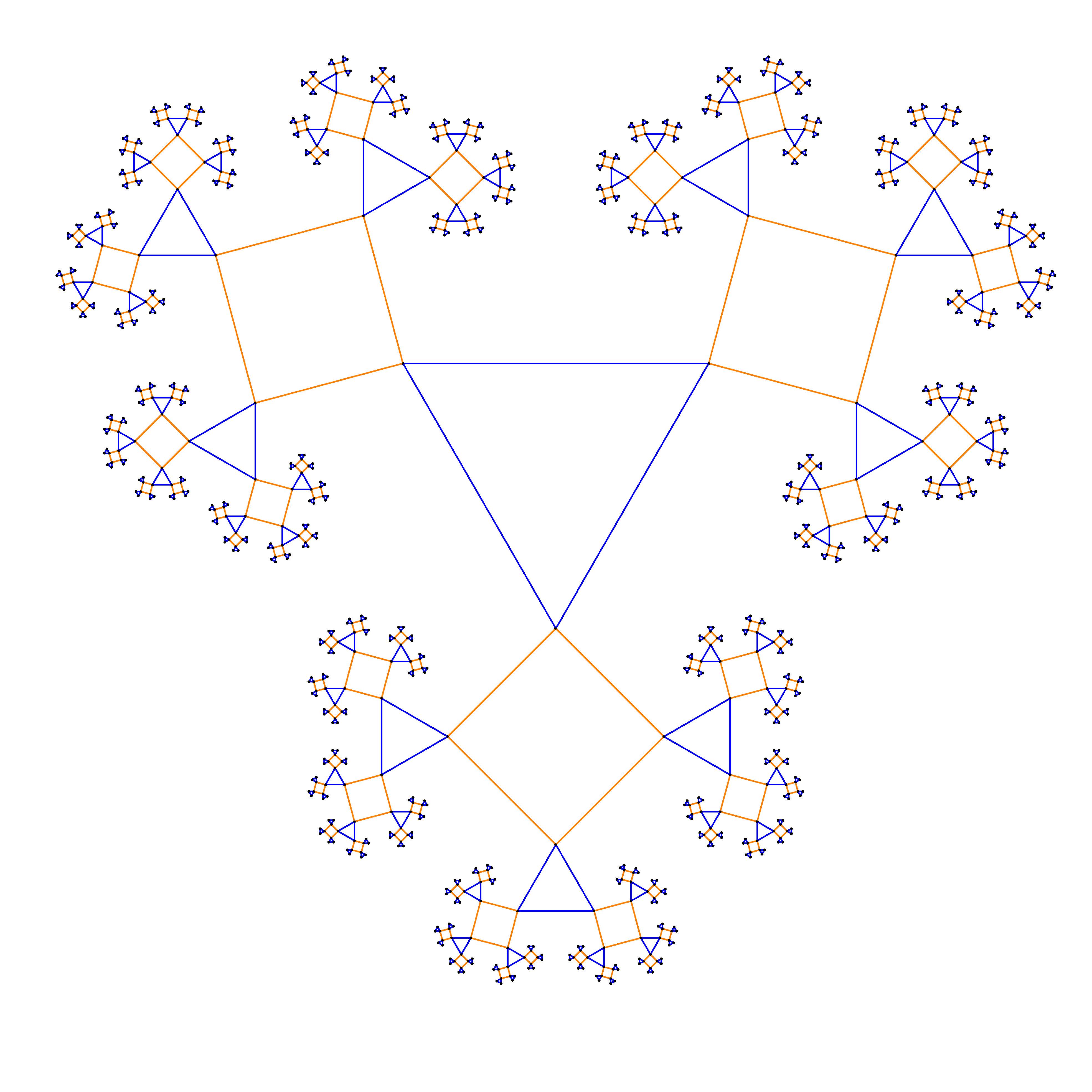}
	\caption{$m=3,n=4$}
	\end{subfigure}
	
	\vspace{10pt}
	\begin{subfigure}{.5\textwidth}
	\centering	
        \includegraphics[width=0.5\linewidth]{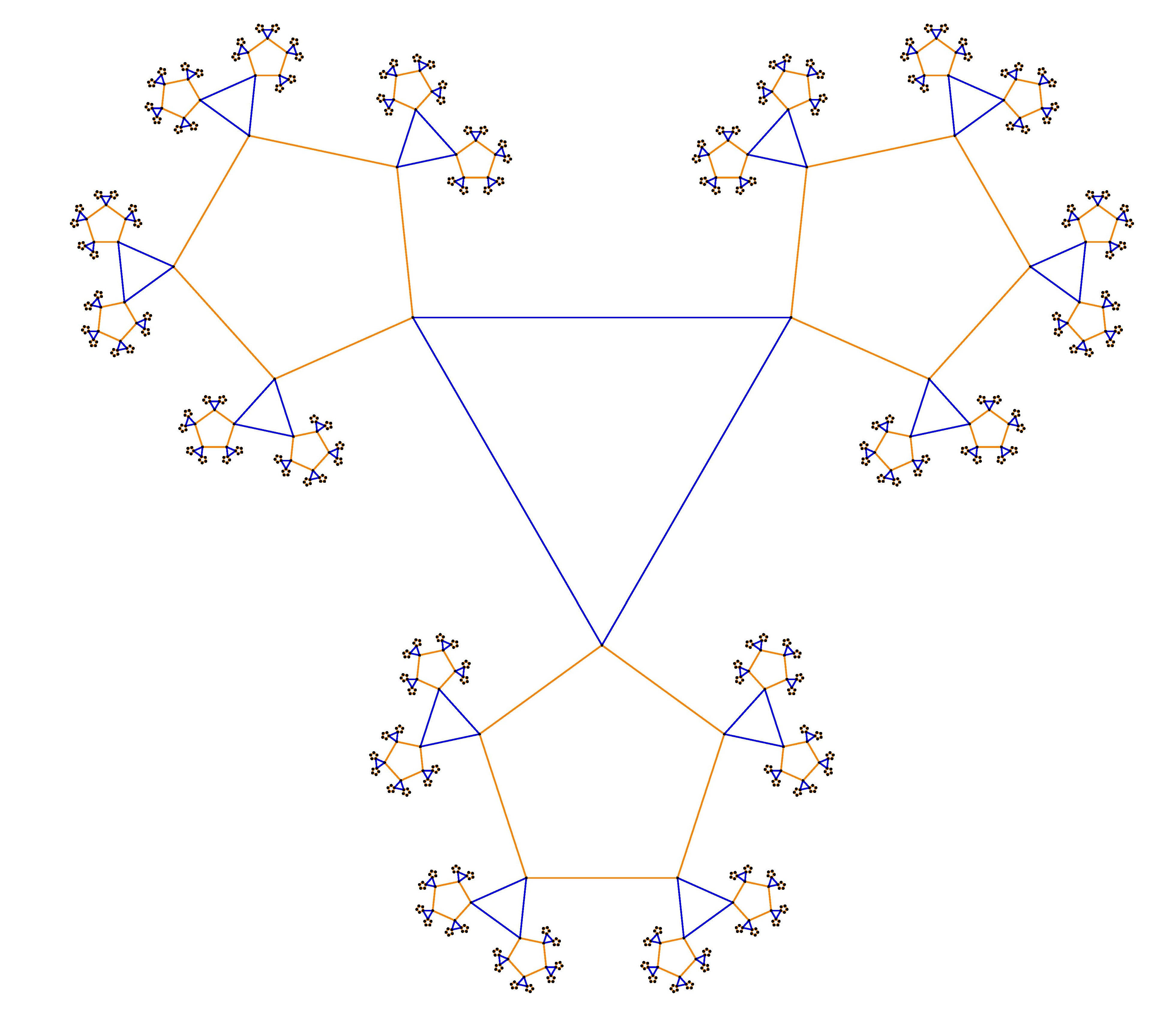}
	\caption{$m=3,n=5$}
	\end{subfigure}%
	\begin{subfigure}{.5\textwidth}
	\centering
	\includegraphics[width=0.495\linewidth]{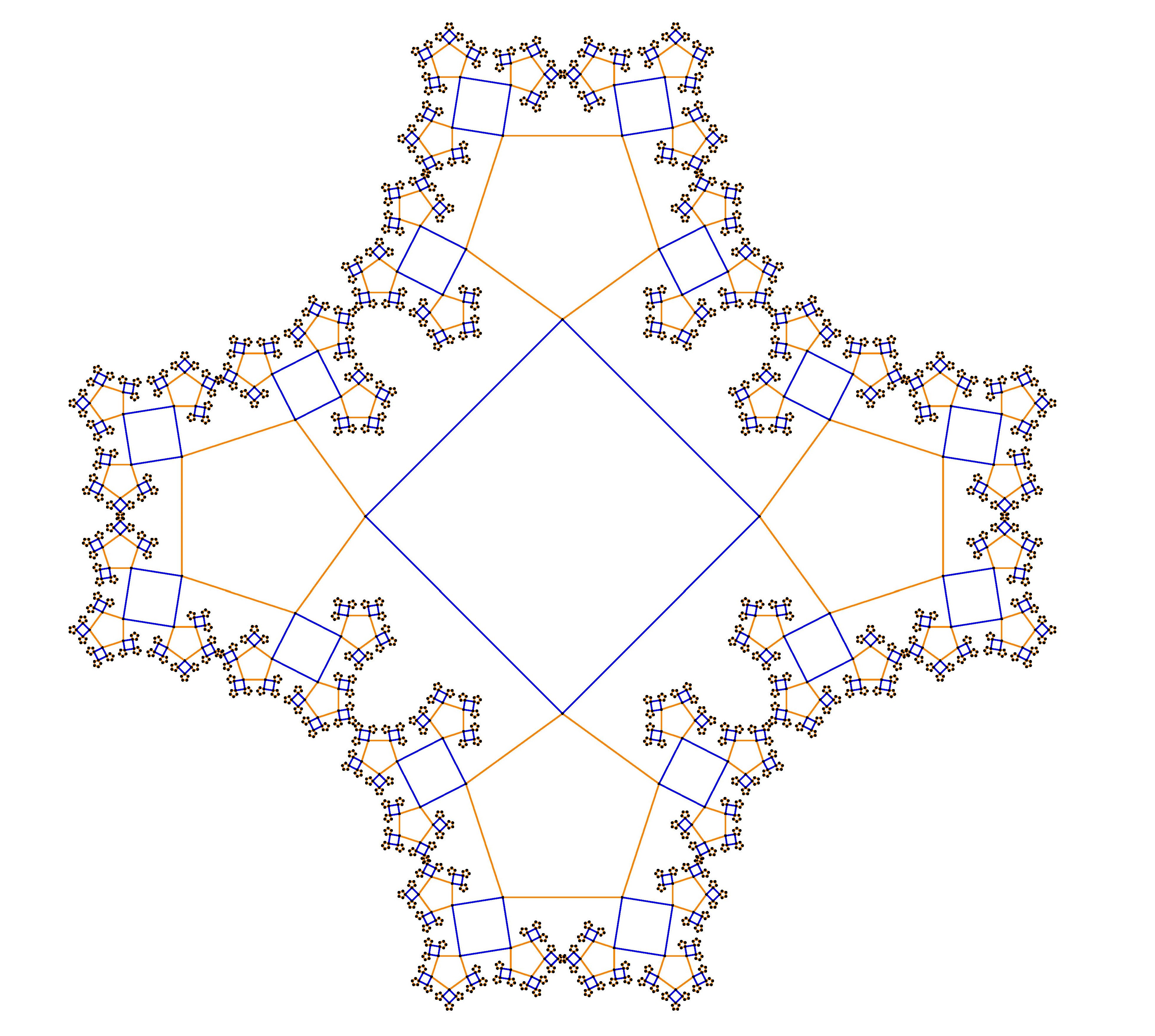}
	\caption{$m=4,n=5$}
	\end{subfigure}
\caption{\small Cayley graphs $X(G;S)$ of some virtually free groups. Each
  group is a free products of two cyclic groups
  $G=\Z/m\Z\star\Z/n\Z=\gen{x,y|x^m=1,y^n=1}$ with generating set
  $S=\{x,\inv{x},y,\inv{y}\}$.}
\label{fig_CG_2cylic}
\label{fig:graphs}
\end{figure}

Using a combinatorial argument, we are able to determine a grammar to
generate the language $\mathcal{L}(G;S)$ when $G$ is the free product
of a finite number of groups in which the groups are either finite or infinite cyclic. From the grammar it is
then straightforward to deduce a family of algebraic equations which yield
the cogrowth series. We use this system to find some explicit
expressions for series. We also outline a second strategy using free
probability to determine bounds~\cite{liu} on the degree and height of a
bivariate polynomial $\Lambda(t,z)\in \mathbb{Z}[t,z]$ such that
$\Lambda(t, F(t))=0$. Such bounds can be combined with strategic guessing
techniques in order to deterministically compute $\Lambda$.

The Cayley graphs of free products of finite groups provide a graph-theoretic
interpretation of this language. Recall,
that the Cayley graph of a group $G$ with given generating set $S$ is
the directed graph denoted $\mathcal{X}(G;S)$ with vertex set given by the elements of
$G$ with edge set $\{(g, gs): s\in S\}$. If the generating set is
closed under taking inverses, then we identify the edges $(g,gs)$ and $(gs,g)$ and the graph is considered to be undirected. The
words in $\mathcal{L}(G;S)$ are in bijection with walks on $\mathcal{X}(G;S)$
that start and end at the group identity element.
Figure~\ref{fig:graphs} illustrates subgraphs of Cayley graphs of some virtually
free groups, specifically neighbourhoods of the identity. To imagine
the full graph, recall that Cayley graphs are vertex transitive, and hence
these graphs are all infinite, and fractal in nature. 

The precise statements of the results that we can obtain are as
follows. We give explicit bounds on the degree and height of the minimal
polynomial of the cogrowth series for a group that is the free product
of a finite number of finite groups. This is proved in~\S\ref{sec:bounds}.
\begin{theorem}
\label{thm:mainbound}
   Let $G_1,\ldots ,G_r$ be finite groups with generating sets $S_1,\hdots, S_r$ respectively, let $G=G_1^{\star m_1}\star\dots G_r^{\star m_r}$, and let $S:=\bigcup_{i=1}^r \bigcup_{j=1}^{m_i} S_i^{(j)}\subseteq G$, where for each $i$, $S_i^{(1)},\ldots ,S_i^{(m_i)}$ are copies of $S_i$ in the corresponding copies of $G_i$ used in the formation of $G$.  Then the cogrowth series $F(t):=F_{G;S}(t)$ of $G$ with respect to $S$ is algebraic and satisfies $\Lambda(t,F(t))=0$, where $\Lambda(t,z)$ is a nonzero polynomial with rational coefficients with
 $${\rm deg}_t(\Lambda), {\rm deg}_z(\Lambda) \le \left(\prod_{i=1}^r \Delta_i\right) \left( 1 + \sum_{i=1}^r \frac{1}{\Delta_i} \right),$$
 where
   $\Delta_i$ is the sum of the degrees of the irreducible representations of $G_i$ for $i=1,\ldots ,r$.
In particular, the degrees do not depend on $m_1,\ldots ,m_r$ when we choose
 $S$ as above.
 \end{theorem}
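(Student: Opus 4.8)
The plan is to realize the cogrowth series $F(t)$ as an entry (or a diagonal-type combination of entries) of the resolvent of a finite matrix built from the regular representations of the $G_i$, and then bound its algebraic degree by the size of that matrix. Concretely, I would first set up the combinatorial framework: a word in $\mathcal{L}(G;S)$ is a walk on the Bass--Serre tree / Cayley graph of the free product that returns to the identity. Such walks decompose according to which free factor is visited first, giving a system of algebraic (quadratic-type) equations for the ``first-return'' generating functions $R_i(t)$ associated to each copy of each $G_i$, together with an equation expressing $F(t)$ in terms of the $R_i$. Because the copies $G_i^{(j)}$ for fixed $i$ all contribute identically, the system collapses to one unknown $R_i$ per isomorphism type $i$, which is the source of the independence from $m_1,\dots,m_r$.

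Next I would bring in representation theory to control the size of this system. For a single finite group $H$ with symmetric generating set $T$, the generating function counting walks in the Cayley graph from $1$ to a fixed element is an entry of $(I - t M_T)^{-1}$, where $M_T = \sum_{s\in T} \rho_{\mathrm{reg}}(s)$ acts on $\mathbb{C}[H]$; decomposing the regular representation into irreducibles $V_\lambda$ with multiplicity $\dim V_\lambda$, the relevant spectral data lives on a space of dimension $\Delta_i = \sum_\lambda \dim V_\lambda$ rather than $|G_i|$. So the ``local'' resolvent data for $G_i$ satisfies a polynomial relation of degree at most $\Delta_i$. The key algebraic step is then to show that the full system for $F(t)$ — obtained by composing the free-product recursion with these local resolvents — is a polynomial system whose elimination ideal yields a $\Lambda(t,z)$ of degree bounded by $\prod_i \Delta_i \cdot (1 + \sum_i 1/\Delta_i) = \prod_i \Delta_i + \sum_i \prod_{k\neq i}\Delta_k$. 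The extra additive term counts the $r$ additional ``first-return'' unknowns $R_1,\dots,R_r$ beyond the product structure coming from the $r$ factors interacting; I would make this precise by writing the system in a form amenable to a resultant/Bézout bound, tracking the degree in $z$ and in $t$ separately and noting the equations are (at worst) quadratic in the $R_i$ and linear in the local resolvent entries, so the Bézout count telescopes to the claimed product-plus-sum expression.

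The main obstacle I anticipate is the bookkeeping in the elimination step: one must verify that the naive Bézout bound $\prod (\text{degrees})$ is not what comes out, but rather the sharper $\prod_i \Delta_i + \sum_i \prod_{k\neq i}\Delta_k$, which requires exploiting the specific block/sparsity structure of the system (each local resolvent block only couples to the global variable through low-degree relations) rather than treating it as a generic system. A clean way to handle this is to eliminate variables in stages — first eliminate the internal resolvent entries of each $G_i$ against its characteristic-polynomial-type relation (degree $\Delta_i$), then eliminate the $R_i$ — and argue that each stage multiplies degrees in a controlled way; the additive $\sum_i \prod_{k\neq i}\Delta_k$ term then emerges from the final elimination of the $R_i$'s against the free-product functional equation. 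A secondary technical point is ensuring $\Lambda$ can be taken with rational (indeed integer, after clearing denominators) coefficients and is genuinely nonzero, which follows because the whole construction is defined over $\mathbb{Q}$ and $F(t)$ is a nonconstant algebraic function, so its minimal polynomial over $\mathbb{Q}(t)$ divides the eliminant we produce and hence inherits the degree bounds.
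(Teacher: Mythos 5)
Your outline correctly identifies the representation-theoretic input: for each finite factor $G_i$ the cogrowth is rational and its algebraic degree is governed by $\Delta_i=\sum_\lambda\dim V_\lambda$ via Artin--Wedderburn and Cayley--Hamilton (this is the paper's Lemma 3.1), and the multiplicities $m_i$ only rescale coefficients, so the degree bound is $m$-independent. Your ``first-return'' decomposition is also the right combinatorial shadow of the paper's engine, namely the free-probability identity
\[
\invCT_{\alpha}(t) = \sum_{i} m_i \invCT_{\alpha_i}(t) - (m_1+\cdots+m_r-1)\,t^{-1}
\]
for inverse Cauchy transforms; this is exactly how the infinite-group problem is reduced to a finite-dimensional one.

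The genuine gap is the degree bound itself. You concede that a naive B\'ezout count does not give $\prod_i\Delta_i + \sum_i\prod_{k\ne i}\Delta_k$ and propose a ``staged elimination'' exploiting sparsity, but you supply no mechanism by which the additive term $\sum_i\prod_{k\ne i}\Delta_k$ appears; successive resultants generically \emph{multiply} degrees, and nothing in your sketch forces the sharper telescoped count. The paper avoids resultants entirely: it works in the compositum $E$ of the fields $\overline{\Q}(t)(\invCT_{\alpha_i})$, of dimension $\Delta=\prod_i\Delta_i$ over $\overline{\Q}(t)$, lifts each multiplication-by-$\invCT_{\alpha_i}$ to an explicit $\Delta\times\Delta$ matrix whose entries are rational functions of $t$ with numerator degree at most $|\Gamma|$ and denominator $\prod_{i\in\Gamma}c_{i,\Delta_i}(t)$, and then takes the characteristic polynomial of the lifted $\invCT_{\alpha}$. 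The $z$-degree bound $\Delta$ is simply $\dim E$; the extra $\Delta'=\sum_i\Delta/\Delta_i$ in the $t$-degree arises from counting, over the product basis, which coordinates sit at $j_i=\Delta_i-1$ and clearing the resulting denominators in the characteristic polynomial. To make your proposal rigorous you would need to reconstruct essentially this matrix bookkeeping; a resultant or B\'ezout argument does not obviously land on the stated bound, and as written the crucial step is asserted rather than proved.
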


In fact, we are able to compute the polynomials $\Lambda(t,z)$ in the statement of Theorem \ref{thm:mainbound} for several families of free products.
The results in Theorem~\ref{thm:main2} are worked out
in~\S\ref{sec:exam}.
%--------------------------------------------------
\begin{theorem} 
\label{thm:main2}
%--------------------------------------------------
We have the following results.
\begin{enumerate}[itemsep=15pt,label=(\alph*)]
\item Let $d,m \ge 2$. If $G=\langle x_1 ~|~x_1^d=1\rangle\star \cdots \star \langle x_m~|~x_m^d=1\rangle \cong \left(\mathbb{Z}/d\mathbb{Z}\right)^{\star m}$ and $S=\{x_1,\ldots ,x_m\}$, then $F(t)=F_{G,S}(t)$ is the unique solution to the equation
$$m^d t^d F(t)^d = (F(t)-1)(F(t)+m-1)^{d-1}\qquad {\rm with}~F(0)=1,$$ and $F(t)$ has radius of convergence  $(d-1)^{(d-1)/d}/(d(m-1)^{1/d})$.\label{example:a}

\item Let $m, s\ge 0$ be integers with $m+2s\ge 2$. If $G=\langle x_1 ~|~x_1^2=1\rangle\star \cdots \star \langle x_m~|~x_m^2=1\rangle \star \langle y_1,\ldots, y_s \rangle \cong \left(\mathbb{Z}/2\mathbb{Z}\right)^{\star m} \star \mathbb{Z}^{\star s}$ and  $S=\{x_1,\ldots ,x_m,y_1,y_1^{-1},\ldots ,y_s, y_s^{-1}\}$ then $F(t)=F_{G,S}(t)$ is the unique solution to the equation

$$(m+2s)^2 t^2 F(t)^2 = (F(t)-1)(F(t)+m+2s-1)\qquad {\rm with}~F(0)=1,$$ and $F(t)$ has radius of convergence  $1/(2\cdot (m+2s-1)^{1/2})$.
\label{example:b}
% \item[(c)] If $ G = \langle x,y~ |~ x^2=y^3=xyxy=1\rangle \star \langle ~w,z |~ w^2=z^3=wzwz=1\rangle\cong S_3\star S_3$ and $S=\{x,y,z,w\}$ then $$F(t)=???$$ and $F(t)$ has radius of convergence \mjm{ ??}.

\item If $n\ge 2$ and $G=\langle x~|~x^2=1\rangle \star \langle y ~|~y^n=1\rangle \cong \mathbb{Z}/2\mathbb{Z}\star \mathbb{Z}/n\mathbb{Z}$ and $S=\{x,y\}$ then 
$$F_{G,S}(t)= (1-tD)/((1-tD)^2-t^2),$$ where $D$ is the unique power series solution to the equation
$$t^{n-1}(1-tD)^{n-1} = (1-tD-t^2)^{n-1} D$$ whose expansion begins $t^{n-1}+{\rm higher~degree~terms}$.
\label{example:d}

%\item If $G=\langle x_1 ~|~x_1^d\rangle\star \cdots \star\langle x_m~|~x_m^d\rangle \cong \left(\mathbb{Z}/d\mathbb{Z}\right)^{\star m}$ and $S=\{x_1^{\pm 1},\ldots ,x_m^{\pm 1}\}$, then $F(t) = ...$ for $d=3$ and all $m$.
%\label{example:e}
\end{enumerate}
\end{theorem}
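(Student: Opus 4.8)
The plan is to obtain each identity by specialising, to the relevant family, the algebraic system satisfied by the cogrowth series of a free product of finite and infinite cyclic groups (the system extracted from the grammar for $\mathcal{L}(G;S)$; equivalently, the first‑return decomposition of closed walks on $\mathcal{X}(G;S)$). Recall its shape. If $G = G_1^{\star m_1}\star\cdots\star G_r^{\star m_r}$ carries the induced generating set, then $\mathcal{X}(G;S)$ is a one‑vertex‑at‑a‑time gluing of copies of the factor graphs $\mathcal{X}(G_i;S_i)$, with every vertex lying on exactly $m_1+\cdots+m_r$ such copies, one per free factor. A closed walk at the identity decomposes into excursions, and an excursion into a copy of $\mathcal{X}(G_i;S_i)$ is a first‑return closed walk of that factor graph in which each of the $\ell-1$ interior time steps of a length‑$\ell$ walk is decorated by an arbitrary closed walk that avoids the copy being traversed. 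Writing $L_i(u)=\sum_\ell c^{(i)}_\ell u^\ell$ for the first‑return generating function of $\mathcal{X}(G_i;S_i)$, $\widehat F_i$ for the series counting closed walks that avoid one copy of $\mathcal{X}(G_i;S_i)$, and $\Phi_i := L_i(t\widehat F_i)/\widehat F_i$ for the excursion series, self‑similarity closes the system:
$$F = \Bigl(1-\sum_j m_j\Phi_j\Bigr)^{-1},\qquad \widehat F_i = \Bigl(1-(m_i-1)\Phi_i-\sum_{j\ne i}m_j\Phi_j\Bigr)^{-1}.$$
In all three families the factors are cyclic, so the $L_i$ are elementary: $\Z/d\Z$ with a single generator has the directed $d$‑cycle as Cayley graph, whose unique first‑return walk has length $d$, so $L_i(u)=u^d$ and $\Phi_i = t^d\widehat F_i^{\,d-1}$ (hence $\Phi_i=t^2\widehat F_i$ when $d=2$); and $\Z$ with generators $\{y,y^{-1}\}$ has the bi‑infinite path as Cayley graph, giving $L_i(u)=1-\sqrt{1-4u^2}$.

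For part (a) all $m$ factors coincide, so with a common $\widehat F$ we have $\Phi_i=\Phi=t^d\widehat F^{\,d-1}$, $\widehat F=(1-(m-1)\Phi)^{-1}$ and $F=(1-m\Phi)^{-1}$. Eliminating $\Phi$ from the last two relations gives $\widehat F = mF/(F+m-1)$, and inserting this into the equivalent form $\widehat F-1=(m-1)t^d\widehat F^{\,d}$ of the second relation, then clearing denominators, yields $m^dt^dF^d=(F-1)(F+m-1)^{d-1}$. Writing $\Lambda(t,z)=m^dt^dz^d-(z-1)(z+m-1)^{d-1}$, we have $\partial_z\Lambda(0,1)=-m^{d-1}\ne 0$, so there is a unique formal power series solution with $F(0)=1$, necessarily the cogrowth series. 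Because it has nonnegative coefficients, Pringsheim's theorem places its dominant singularity on the positive real axis, at the least $t_0>0$ for which the curve acquires a double root; eliminating the $z$‑coordinate between $\Lambda=0$ and $\partial_z\Lambda=0$ gives $t_0=(d-1)^{(d-1)/d}/(d(m-1)^{1/d})$ (the degenerate case $d=m=2$, where the graph is a path, being checked directly), and one finishes by verifying that $F$ continues analytically throughout $|t|<t_0$.

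For part (b) each factor Cayley graph is a tree (a single edge for $\langle x_i\mid x_i^2\rangle$, the bi‑infinite path for a $\Z$), and a one‑vertex‑at‑a‑time gluing of trees is a tree (a cycle would have to lie inside a single copy), so $\mathcal{X}(G;S)$ is a tree; being regular of degree $m+2s$ (each $\Z/2$‑factor contributing degree $1$, each $\Z$‑factor degree $2$ at every vertex) it is the $N$‑regular tree $T_N$ with $N=m+2s$. For $T_N$ the first‑return series $L$ into a fixed child satisfies $(N-1)L^2-L+t^2=0$ and $F=(1-NL)^{-1}$; eliminating $L$ yields $N^2t^2F^2=(F-1)(F+N-1)$, and the double‑root analysis (or the classical value $2\sqrt{N-1}$ for the spectral radius of $T_N$) gives radius of convergence $1/(2\sqrt{N-1})=1/(2(m+2s-1)^{1/2})$. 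For part (c) there is one copy each of $\Z/2\Z$ ($L(u)=u^2$) and $\Z/n\Z$ ($L(u)=u^n$), so $\Phi_1=t^2\widehat F_1$, $\Phi_2=t^n\widehat F_2^{\,n-1}$, and, since deleting one of the two blocks at a vertex leaves exactly the other, $\widehat F_1=(1-\Phi_2)^{-1}$, $\widehat F_2=(1-\Phi_1)^{-1}$, $F=(1-\Phi_1-\Phi_2)^{-1}$. Put $D:=\Phi_2/t=t^{n-1}\widehat F_2^{\,n-1}$, a series beginning $t^{n-1}+\cdots$. Then $\widehat F_2=(1-\Phi_1)^{-1}=(1-t^2/(1-tD))^{-1}=(1-tD)/(1-tD-t^2)$, so substituting into $D=t^{n-1}\widehat F_2^{\,n-1}$ gives $t^{n-1}(1-tD)^{n-1}=(1-tD-t^2)^{n-1}D$, which determines $D$ uniquely; and $1-\Phi_1-\Phi_2=1-t^2/(1-tD)-tD=((1-tD)^2-t^2)/(1-tD)$ gives $F=(1-tD)/((1-tD)^2-t^2)$.

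The genuinely delicate point is the setup rather than the algebra: one must verify that the decorations filling the interior of an excursion are counted by $\widehat F_i$ — closed walks avoiding precisely the copy of $\mathcal{X}(G_i;S_i)$ currently being traversed — and not by $F$ itself, and that with this choice the self‑similarity of the free‑product Cayley graph makes the finite system close. Granting this, parts (b) and (c) are immediate, while part (a) reduces to a resultant computation together with a routine Pringsheim‑and‑analytic‑continuation argument identifying the dominant singularity; a free‑probability reading (each $F$ encodes the free convolution of the explicit spectral measures of the cyclic factors) provides an independent check.
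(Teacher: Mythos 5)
Your excursion/first-return system on the Cayley graph is the same renewal decomposition that the paper encodes language-theoretically in Lemma~\ref{lem:eq}: your $\widehat F_i$ is $F_{1,\{x\}}$-type series (the paper's $A$, $A_1$, $A_2$), your $\Phi_i$ is $tC$, $tD$, and the closure relations you write are exactly Equations~\eqref{eq:A}--\eqref{eq:W} and their analogues in Example~\ref{exam:3}; the resulting eliminations for (a) and (c) match the paper's. Your part (b) takes a genuinely different (and perfectly legitimate) route: the paper reduces $(\mathbb{Z}/2\mathbb{Z})^{\star m}\star\mathbb{Z}^{\star s}$ to $(\mathbb{Z}/2\mathbb{Z})^{\star(m+2s)}$ by the letter-interlacing bijection of Remark~\ref{remark:cogrowth_equiv} and then quotes (a) with $d=2$, whereas you compute directly on the $(m+2s)$-regular tree; your appeal to the classical spectral radius $2\sqrt{N-1}$ also covers the radius claim in (b).

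The genuine gap is the radius of convergence in part (a). You assert that the dominant singularity sits at the least $t_0>0$ where $\Lambda(t,z)=m^dt^dz^d-(z-1)(z+m-1)^{d-1}$ acquires a double root, and defer the rest to ``verifying that $F$ continues analytically throughout $|t|<t_0$.'' That deferred step is exactly the nontrivial point, and the principle as stated is false in general: singularities of an algebraic series can also occur at zeros of the leading coefficient in $z$, here $m^dt^d-1$, i.e.\ at $t=1/m$. Moreover $1/m$ lies \emph{strictly inside} the disc $|t|<t_0$ whenever $(d,m)\neq(2,2)$: by AM--GM applied to $m-1$ together with $d-1$ copies of $1/(d-1)$ one gets $m^d(d-1)^{d-1}\ge d^d(m-1)$, i.e.\ $1/m\le t_0$ with equality only at $d=m=2$. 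So the candidate set of positive singularities is $\{1/m,\,t_0\}$ and one must positively exclude a singularity at $1/m$ (a branch of the curve does blow up there). The paper does this with a group-theoretic input: for $(d,m)\neq(2,2)$ the group is nonamenable, and the Gray--Kambites strengthening of Kesten's criterion forces the cogrowth rate to be strictly less than $|S|=m$, so the radius of convergence exceeds $1/m$ and must therefore be $t_0$, with the amenable case $d=m=2$ (where $t_0=1/m$) checked separately. Your sketch supplies no argument of this kind, so as written the radius statement in (a) is unproved; either import Kesten-type nonamenability bounds as the paper does, or supply a direct estimate showing $F$ stays bounded as $t\to(1/m)^-$.
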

%------------------------------------------

%Application 2 of examples
We compute the power series $D$ in Theorem \ref{thm:main2} (b) for small values of $n$ in Example \ref{exam:3}. Additionally, as a consequence of Theorem~\ref{thm:main2}, we are able to prove a gap result.
\begin{theorem}\label{thm:main3}
Let $G$ be a finitely generated group with finite symmetric generating set $S$ and let $\rho_{G,S}$ denote the radius of convergence of the cogrowth generating series of $G$ with respect to $S$.  Then $\rho_{G,S}^{-1} \in \{1,2\}\cup [2\sqrt{2},\infty)$.
\end{theorem}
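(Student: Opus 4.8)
The plan is to bound the cogrowth rate $\rho_{G,S}^{-1}$ both above and below by comparing $G$ with the free products of copies of $\mathbb{Z}/2\mathbb{Z}$ and $\mathbb{Z}$ for which Theorem~\ref{thm:main2}(b) computes the radius of convergence exactly. Write $k=|S|$. If $k=1$ then $S=\{s\}$ with $s=s^{-1}$, so $G$ is trivial or $\mathbb{Z}/2\mathbb{Z}$; in either case ${\rm CL}(n;G,S)\in\{0,1\}$ and $\rho_{G,S}^{-1}=1$, so we may assume $k\ge 2$. The upper bound is trivial: there are at most $k^n$ words of length $n$ over $S$, so $\rho_{G,S}^{-1}\le k$.

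For the lower bound, partition $S$ into its $m$ self-inverse elements (placing $1\in S$, if present, among these) and its $s$ two-element subsets $\{t,t^{-1}\}$ with $t\ne t^{-1}$, so $m+2s=k\ge 2$. Let $\widehat G=(\mathbb{Z}/2\mathbb{Z})^{\star m}\star\mathbb{Z}^{\star s}$ equipped with the generating set $\widehat S$ of Theorem~\ref{thm:main2}(b), and let $\phi\colon\widehat G\to G$ be the surjection sending the $i$-th order-$2$ generator to the $i$-th self-inverse element of $S$ and the $j$-th free generator to a chosen representative of the $j$-th pair; this is well defined precisely because the images of the order-$2$ generators are self-inverse in $G$. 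Applying $\phi$ letterwise is a length-preserving bijection $\widehat S^n\to S^n$ carrying every word trivial in $\widehat G$ to a word trivial in $G$, so ${\rm CL}(n;\widehat G,\widehat S)\le{\rm CL}(n;G,S)$ for all $n$, hence $\rho_{\widehat G,\widehat S}^{-1}\le\rho_{G,S}^{-1}$. By Theorem~\ref{thm:main2}(b) the left side equals $2(k-1)^{1/2}$. (Equivalently and more geometrically: $\mathcal{X}(G;S)$ is a connected $k$-regular graph whose universal cover is the $k$-regular tree $\mathcal{X}(\widehat G;\widehat S)$, and closed walks in the tree lift injectively to closed walks in $\mathcal{X}(G;S)$.)

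Assembling the cases: for $k=2$ we get $2=2(k-1)^{1/2}\le\rho_{G,S}^{-1}\le k=2$, so $\rho_{G,S}^{-1}=2$; for $k\ge 3$ we get $\rho_{G,S}^{-1}\ge 2(k-1)^{1/2}\ge 2\sqrt{2}$; and $k=1$ gives $\rho_{G,S}^{-1}=1$. Hence $\rho_{G,S}^{-1}\in\{1,2\}\cup[2\sqrt{2},\infty)$. That the $\limsup$ defining $\rho_{G,S}^{-1}$ is a genuine limit (along even $n$) follows from Fekete's lemma applied to the supermultiplicative sequence $n\mapsto{\rm CL}(2n;G,S)$, which is bounded by $k^{2n}$.

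I do not expect a serious obstacle: essentially all of the content is imported from Theorem~\ref{thm:main2}(b), and the comparison inequality ${\rm CL}(n;\widehat G,\widehat S)\le{\rm CL}(n;G,S)$ is immediate from the existence of the surjection $\widehat G\to G$. The only points that need care are matching $\widehat G$ and $\widehat S$ exactly to the hypotheses of Theorem~\ref{thm:main2}(b) (in particular the requirement $m+2s\ge 2$, which is exactly the case $k\ge 2$) and handling the degenerate small cases $k\le 2$ and the trivial group.
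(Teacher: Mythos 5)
Your proof is correct, and while it rests on the same two ingredients as the paper's argument --- the elementary comparison lemma (a homomorphism from a free product that is injective on the generating alphabet gives a letterwise injection of trivial words, hence ${\rm CL}(n;\widehat G,\widehat S)\le {\rm CL}(n;G,S)$) together with the exact radii from Theorem~\ref{thm:main2}(b) --- the way you deploy them differs from the paper in a worthwhile way. The paper argues by cases on the number $p$ of involutions and the number $q$ of inverse pairs in $S$: when $p\ge 3$, $q\ge 2$, or $p,q\ge 1$ it compares $G$ with a small fixed free product ($(\Z/2\Z)^{\star 3}$, $\Z^{\star 2}$, or $\Z/2\Z\star\Z$), and it disposes of the leftover cases $(p,q)\in\{(2,0),(1,0),(0,1)\}$ by noting $G$ is then a quotient of $D_\infty$ or $\Z$, hence amenable, and invoking Kesten's criterion to get $\rho_{G,S}^{-1}=|S|$. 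You instead compare with the single ``universal'' free product $(\Z/2\Z)^{\star m}\star\Z^{\star s}$ matching the whole generating set, which gives the uniform lower bound $\rho_{G,S}^{-1}\ge 2\sqrt{|S|-1}$ for all $|S|\ge 2$; combined with the trivial upper bound $\rho_{G,S}^{-1}\le |S|$ this pins down the cases $|S|\le 2$ without any appeal to amenability or Kesten's theorem, and for $|S|\ge 3$ it gives $2\sqrt{|S|-1}\ge 2\sqrt 2$. What your route buys: it is more self-contained (no Kesten/Gray--Kambites input), it yields the sharper quantitative statement $\rho_{G,S}^{-1}\ge 2\sqrt{|S|-1}$ as a by-product, and it handles the degenerate possibility $1\in S$ explicitly (the paper's count $p+2q=|S|$ tacitly assumes $1\notin S$). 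Two small remarks: the parenthetical covering-space aside has the lifting direction loosely phrased (closed walks in the base lift to the tree only if null-homotopic; what you actually use is the projection of closed walks from the tree, or equivalently the algebraic argument you already gave), and the closing Fekete remark is not needed since $\rho_{G,S}^{-1}$ is a $\limsup$ by Cauchy--Hadamard; neither affects correctness.
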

We suspect that all values in $[2\sqrt{2},\infty)$ can occur as the inverse of the radius of convergence of a cogrowth series, since the groups for which $2\sqrt{2}$ is realized as the inverse of a radius of convergence have uncountably many homomorphic images, although we have no evidence that all values in this interval can be realized; it is an interesting problem to determine the possible radii of convergence for a cogrowth generating series for a group with a symmetric generating set $S$.

Finite free products of finite groups and cyclic groups are virtually
free, and so there is a pushdown automaton that accepts the language of
words on~$S$ equal to the identity. In theory, one can translate the
automaton theoretic description to a give a description in terms of grammars, and one can then use this to find a
system of equations. Kuksov~\cite{kuksov} directly describes a
recursive system which he solves to find generating series for some of
the cases of Theorem~\ref{thm:main2} under the condition that one does
not allow ``doubling back'' on the Cayley graph; that is, one does not
allow a symbol $x$ to appear immediately next to the symbol $x^{-1}$
in the words considered. Nevertheless, it appears our systems resolve more cases
than were previously known.  Kuksov~\cite{kuksov} also obtained the
result of Theorem~\ref{thm:main2}\ref{example:a} in the case when
$d=2,3$. These two counting sequences appear the Online Encyclopedia
of Integer Sequences~\cite{oeis} as sequences A183135 and A265434.
Alkauskas~\cite{alkauskas} worked out Theorem~\ref{thm:main2}\ref{example:d}
in the case when $m=2$ and $n=3$, refining it to actually get the
cubic equation for the cogrowth series (this is of special
significance because this corresponds to the group ${\rm
  PSL}_2(\mathbb{Z})$).  Theorem~\ref{thm:main3} was noticed as a
curiosity coming from computations done while proving 
Theorem~\ref{thm:main2} and working out other examples.

The outline of this paper is as follows: In~\S\ref{sec:grammar}, we
give equations for computing the cogrowth of finite free groups and
cyclic groups.  We also prove the equations have a unique set of
solutions in power series with a given initial condition and that the
solutions are algebraic, although, as noted earlier, the algebraicity
follows from the Chomsky-Sch\"utzenberger Theorem~\cite{chomsky} and
the work of Muller and Schupp~\cite{muller}. In \S\ref{sec:exam} we
work out several examples, which are listed in the statement of
Theorem~\ref{thm:main2}; The main result of \S\ref{sec:bounds} is a
general bound on the degree of the minimial polynomial of co-growth series for
the free products of finite groups, which gives Theorem \ref{thm:mainbound} as a consequence. These results use ideas from free probability, which were suggested to two of the authors by one of the referees for the earlier paper \cite{BeMi20}.  In \S\ref{sec:gap} we prove the gap result for
radii of convergence given in Theorem~\ref{thm:main3}, using the results from the preceding sections. 

% ------------------------------------------------------------
\section{A grammar construction and system of equations}
\label{sec:grammar}
%------------------------------------------------------------
Computing the cogrowth of free products of free groups has been done
in a number of cases \cite{alkauskas}, \cite{kuksov}, \cite{kuksov2}.
We note that Kuksov's work is the most general, but as we have already
remarked, he computes cogrowth using an altered definition.  In particular, he only counts \emph{reduced words} in the generating set $S$ that are equal to $1$; that is, if $x, x^{-1}\in S$ then he does not allow $x$ to immediately follow $x^{-1}$ or $x^{-1}$ to immediately follow $x$ in the words he considers.  We prove an analogue of his result that allows ``doubling back'' on the Cayley graph. We give an explicit algebraic system satisfied by the generating function. Alkauskas~\cite{alkauskas} does a computation for ${\rm PSL}_2(\mathbb{Z})$, which is a free product of a cyclic group of order $2$ and a cyclic group of order $3$, using a non-symmetric generating set of size $2$.  

We fix the following notation for use throughout this section.
We let $m$ be a positive integer and we let $G_1,\ldots , G_m$ be groups;
We let $S_i\subseteq G_i$ be a generating set for $G_i$ for $i=1,\ldots ,m$;
We let $S=\cup S_i\subseteq G_1\star \cdots \star G_m =: G$ be a generating set for the free product of $G_1,\ldots ,G_m$, where we identify $G_i$ with its image in the free product under the canonical inclusion when forming $S$.
 
For each $g\in G$, and $X\subseteq G$, we let $\mcl_{g, X}(S)$ (or
simply $\mcl_{g,X}$ if $S$ is understood) be the language of words
with the property that $s_1\cdots s_n \in \mathcal{L}_{g, X}(S)$ for a word $s_1\cdots s_n$ of length $n$ over the alphabet $S$, if $s_1\cdots s_n = g$
and for $1\le i<n$ we have $s_1\cdots s_i\not\in X$.  In this case, we
say that all proper prefixes of $s_1\cdots s_n$ \emph{avoid} $X$. We let
$F_{g,X}(t)$ be the ordinary generating function for the language $\mathcal{L}_{g,X}(S)$.

In the following lemma, we take $+$ to be combinatorial sum, i.e. disjoint
  union, and $\cdot$ to be concatenation, and for a statement $\mathrm{P}$ we take $\chi(\mathrm{P})$ to be $1$ if $\mathrm{P}$ is true and $0$ if $\mathrm{P}$ is false; in the case that $Z$ is a set, we take $\chi(\mathrm{P})Z$ to be empty if $\mathrm{P}$ is false; and the set $Z$ if $\mathrm{P}$ is true.  
  \begin{lemma}\label{lem:eq} Adopt the notation above.  Then for $i\in\{1,\ldots ,
  m\}$, each $g\in G_i$, and $X\subseteq G_i$, we have the following
  combinatorial relations:
  
\begin{align}
\label{lem:eq:case_1inX_gnot1}
\mcl _{g,X}&=\left(\chi\left(g\in S_i\cap X\right)\{g\} \right) 
                           + \bigcup_{s\in S_i\setminus X}
                   \left(\{s\}\cdot
                   \mcl _{s^{-1}g,s^{-1}X}\right)& 1\in X, g\neq~1;\\
\label{lem:eq:case_1notinX_gnot1}
\mcl _{g,X}&=\mcl _{1,X}\cdot \mcl _{g,\unone{X}}&1\notin X,\ g\neq 1;\\
\label{lem:eq:case_1notinX_g1}
\mcl _{1,X}&=\epsilon + \left(\mcl _{1,X}\cdot \left(\mcl _{1,\unone{X}}\setminus\epsilon\right)\right),&1\notin X;\\
\label{lem:eq:case_1inX_g1}
\mcl _{1,X}&=\epsilon + \bigcup_{s\in S\setminus
                   S_i}\left(\{s\}\cdot
                   \mcl _{s^{-1},\{s^{-1}\}}\right)+\bigcup_{s\in
                   S_i\setminus X}\left(\{s\}\cdot
                   \mcl _{s^{-1},s^{-1}X}\right)&1\in  X.
\end{align}
\end{lemma}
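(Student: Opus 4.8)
The plan is to verify each identity by a direct combinatorial case analysis on the structure of a word $w = s_1\cdots s_n \in \mcl_{g,X}$, tracking where its proper prefixes first (or last) enter the forbidden set $X$. Throughout, recall that membership in $\mcl_{g,X}$ means $w$ evaluates to $g$ in $G$ and no proper prefix evaluates into $X$, and note that since $g\in G_i$ and $X\subseteq G_i$, the relevant group elements all live in the factor $G_i$ except when a prefix temporarily leaves $G_i$ by reading a letter from some $S_j$ with $j\ne i$.

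First I would handle \eqref{lem:eq:case_1inX_gnot1}: here $1\in X$ and $g\ne 1$, so the empty prefix is allowed but \emph{no} nonempty proper prefix may equal $1$, and in particular the first letter $s_1$ must lie outside $X$. If $w$ has length $1$ then $w=g$ and this requires $g\in S_i$; since also $1\in X$ and $g\ne 1$, consistency with ``$s_1\notin X$'' forces... actually we must be slightly careful: a length-one word has no \emph{proper} prefixes other than $\epsilon$, so the only constraint is $g\in S_i$; the term $\chi(g\in S_i\cap X)\{g\}$ records exactly this (the extra condition $g\in X$ being automatically compatible because the single letter is not a proper prefix of itself). For $n\ge 2$, factor $w = s_1\cdot (s_2\cdots s_n)$ with $s_1\in S_i\setminus X$ (it must be in $S_i$ since the only way to return to $G_i$ after leaving it is irrelevant at the first step — actually $s_1$ could be in $S_j$, but then $s_1$ as a group element is not in $G_i\supseteq X$, so the constraint is vacuous... no: if $s_1\in S_j$, $j\ne i$, then $s_1\cdots s_n = g \in G_i$ forces the word to eventually come back, but the prefix $s_1$ is then a nonempty word not equal to $g$; that's fine. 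Hmm — so I need to also rule out $s_1\in S_j$). The resolution: if $s_1 \in S_j$ with $j \ne i$, write $w$ using \eqref{lem:eq:case_1notinX_gnot1}-style decomposition instead; but the lemma as stated only uses $s\in S_i\setminus X$, so the claim is implicitly that reading a non-$G_i$ letter first is already counted by the recursion on $\mcl_{1,X}$ appearing elsewhere — so \emph{the key subtlety} is to argue that when $1\in X$ and $g\ne 1$, a word in $\mcl_{g,X}$ cannot begin with a letter from $S_j$, $j\ne i$, without some proper prefix returning to $1$. This follows from the normal form for free products: if the first letter is in $G_j\setminus\{1\}$, then to reach $g\in G_i\setminus\{1\}$ the reduced word must pass through a prefix evaluating to $1$ (the $G_j$-syllable must cancel completely before a nonzero $G_i$-syllable can be the final value). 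I would spell this out via the uniqueness of reduced syllable decompositions in $G_1\star\cdots\star G_m$. Given that, the decomposition $w = s_1 \cdot w'$ with $s_1\in S_i\setminus X$ and $w' \in \mcl_{s_1^{-1}g,\,s_1^{-1}X}$ is a bijection (the new forbidden set $s_1^{-1}X$ records exactly the condition that $s_1 s_2\cdots s_k \notin X$), establishing \eqref{lem:eq:case_1inX_gnot1}.

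For \eqref{lem:eq:case_1notinX_gnot1}: here $1\notin X$, so the word $w$ reaching $g\ne 1$ has a well-defined \emph{last} proper prefix equal to $1$ — namely the empty prefix works, and we take the longest prefix $s_1\cdots s_k$ with value $1$ and with $k$ maximal subject to $s_1\cdots s_j\notin X$ for $j<k$; then $s_1\cdots s_k \in \mcl_{1,X}$ and the suffix $s_{k+1}\cdots s_n$ avoids $X\cup\{1\}$ (it avoids $X$ by maximality of the prefix choice relative to the original constraint, and avoids $1$ by maximality of $k$), giving $s_{k+1}\cdots s_n \in \mcl_{g,\,X\cup\{1\}}$; conversely concatenation of such a pair lands in $\mcl_{g,X}$. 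For \eqref{lem:eq:case_1notinX_g1}, the same "last return to $1$" idea applies but now $g=1$: either $w=\epsilon$, or $w$ has a nonempty final block that returns to $1$ for the first time after the previous visit, giving the recursive $\mcl_{1,X}\cdot(\mcl_{1,X\cup\{1\}}\setminus\epsilon)$ structure; the $\setminus\epsilon$ ensures the block is nonempty so the recursion is well-founded as a formal power-series identity (length strictly increases). Finally \eqref{lem:eq:case_1inX_g1} is the base case $1\in X$, $g=1$: then a nonempty $w\in\mcl_{1,X}$ has a first letter $s_1$, which is either outside $G_i$ (any $s\in S\setminus S_i$, with the remaining word needing to return from $s^{-1}$ to $1$ while avoiding $\{s^{-1}\}$ — here the forbidden set collapses to $\{s^{-1}\}$ because once we leave $G_i$ the only relevant constraint translates to not prematurely hitting $s^{-1}$, i.e.\ not closing up too early) or inside $S_i\setminus X$ (with forbidden set $s^{-1}X$ as before); adding $\epsilon$ covers the empty word.

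The main obstacle I anticipate is making the reduction-to-normal-form arguments in \eqref{lem:eq:case_1inX_gnot1} and \eqref{lem:eq:case_1inX_g1} fully rigorous: one must argue precisely why, upon reading a letter from a different free factor, the only surviving avoidance constraint is the single element $s^{-1}$ (equivalently, why the forbidden set $s^{-1}X$ degenerates to $\{s^{-1}\}$ when $s\notin S_i$), and why no proper prefix can sneak back to a forbidden element while "inside" the foreign factor. This rests on the interval structure of prefixes of a word relative to the syllable decomposition in a free product, and I would isolate it as a small auxiliary observation (a prefix of $w$ lies in $G_i$ only if it ends a complete excursion into and back out of the other factors) before deploying it in all four cases.
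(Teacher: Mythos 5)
Your proposal correctly identifies the two key decompositions the paper uses — decomposing by first letter when $1\in X$ (justified by the free-product normal form forcing a prefix to return to the identity before the running product can change factor) and by last return to the identity when $1\notin X$ — and fills in each of the four cases exactly as the paper does. As a bonus, you explicitly flag and justify the collapse of the forbidden set $s^{-1}X$ to $\{s^{-1}\}$ for $s\notin S_i$ in \eqref{lem:eq:case_1inX_g1}, a simplification the paper's proof passes over without comment.
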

\begin{proof}
There are two main ideas in this proof. If $X$ does not contain 1,
then we can factor a walk based on its last passage through 1. If it
does contain 1, then the fact that the $G_i$ are disjoint gives a
restriction that allows us to decompose walks uniquely according to
their first step.  

Suppose first that $g\in G_i\setminus \{1\}$ for some
$i\in \{1,\ldots ,m\}$ and that $1\in X$. If it is that $s_1\cdots s_n=g$
and $s_1\in G_j\setminus \{1\}$ with $j\neq i$ then if
$s_1\cdots s_n=g$, the universal property for free
products gives that some prefix of $s_1\cdots s_n$
must be equal to $1$.
%On the Cayley graph the first step is in a branch different from that
%of $g$, and so to get to $g$ requires to pass through the identity
Hence every word of length $n$ starting with
$s_1\not\in G_i$ that is equal to $g$ must pass through $X$.  Thus if
$s_1\cdots s_n=g$ and every proper prefix avoids $X$ then $s_1$ must be in
$G_i$. If $n=1$, it may be
that $g\in X$ however,  if $n>1$ then $s_1\not\in X$, otherwise it
contradicts the prefix condition.   Then
$s_2\cdots s_n= s_1^{-1}g$ and every prefix of $s_2\cdots s_n$ avoids
$s_1^{-1}X$.  Moreover, if $s_2\cdots s_n= s_1^{-1}g$ and every prefix
of $s_2\cdots s_n$ avoids $s_1^{-1}X$ then appending $s_1$ at the
beginning gives a word of length $n$ that is equal to $g$ and such
that every prefix avoids $X$.  Hence we see that
\[\mathcal{L}_{g,X}= \chi(g\in S_i){g} + \bigcup_{s\in S_i\setminus X} \{s\}\cdot
\mathcal{L}_{s^{-1}g,s^{-1}X}.\]

Next, if $g\in G_i\setminus \{1\}$ for some $i\in \{1,\ldots ,s\}$ and $1\not\in X$
then we can factor the words in $\mathcal{L}_{g,X}$ uniquely by taking the largest prefix whose
product is equal to 1. In particular, if 
$s_1\cdots s_n\in \mathcal{L}_{g,X}$, we let $i<n$ denote the largest index such that $s_1\cdots s_i=1$.  (We note that $i=0$ is possible.) In this case we have a decomposition of $s_1\cdots s_n$ into a product $ab$ with $a=s_1\cdots s_i$ being equal to $1$ and such that every prefix of $a$ avoids $X$ and the word $b=s_{i+1}\cdots s_n$, which is equal to $g$ and such that every proper prefix avoids $X$ and also $1$.
Thus we get 
\[\mathcal{L}_{g,X} =  \mathcal{L}_{1,X}\cdot \mathcal{L}_{g,X\cup \{1\}}.\]

The cases when $g=1$ are similar, although we must 
account for the empty word, which, by convention, evaluates to 1. More precisely, if
$X\subseteq G_i$ and $1\in X$ then for $s\in S$, if we count words
$s_1\cdots s_n$ that are equal to $1$ and such that $s_1=s\not\in X$
and such that every prefix avoids $X$ then for $n\ge 1$ there is a
bijection between the collection of words of length $n$ and words
$s_2\cdots s_n$ of length $n-1$ that are equal to $s_1^{-1}$ and such
that every prefix avoids $s_1^{-1}X$.  Then
\[\mathcal{L}_{1,X}(t) = 1+\bigcup_{j=1}^m \bigcup_{s\in S_j\setminus
    X} \{s\} \cdot \mathcal{L}_{s^{-1}, s^{-1}X}.\]
\end{proof}

There is a classical translation of these combinatorial
equivalences to give a system of functional equations for the
corresponding ordinary generating functions.
    
\begin{corollary}\label{grammarCorrectness_series}
Adopt the notation above.  Then we have the following relations:
%---------------------------------------
\begin{align}
\label{case_1inX_gnot1_series}
F_{g,X}(t)&=\chi(g\in S_i\cap X)t + \sum_{s\in S_i\setminus X}t
                                    F_{s^{-1}g,s^{-1}X}(t) &\text{ if }1\in X,\ g\neq 1;\\
\label{case_1notinX_gnot1_series}
F_{g,X}(t)&=F_{1,X}(t)F_{g,\unone{X}}(t) &\text{ if }1\notin X,\ g\neq 1;\\
\label{case_1notinX_g1_series}
F_{1,X}(t)&=1+F_{1,X}(t)(F_{1,\unone{X}}(t)-1)&\text{ if }1\notin X;\\
\label{case_1inX_g1_series}
F_{1,X}(t)&=1+\sum_{s\in S\setminus
            S_i}tF_{s^{-1},\{s^{-1}\}}(t)+\sum_{s\in S_i\setminus X} tF_{s^{-1},s^{-1}X}(t)&\text{ if  }1\in X.
\end{align}
%---------------------------------------
\end{corollary}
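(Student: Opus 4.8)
The plan is to invoke the routine dictionary between combinatorial operations on languages over the finite alphabet $S$ and operations on ordinary generating functions; see \cite[Chapter~III]{kuich}. For a sublanguage $\mcl'\subseteq S^*$ write $F_{\mcl'}(t)=\sum_{n\ge 0}|\mcl'\cap S^n|\,t^n$, which is a well-defined element of $\Z[[t]]$ because $S$ is finite. The three facts needed are: (i) a disjoint union of languages has generating function equal to the sum of the individual generating functions; (ii) if a concatenation $\mca\cdot \mcb$ is \emph{unambiguous}, meaning every word of the product factors in exactly one way as a word of $\mca$ followed by a word of $\mcb$, then its generating function is $F_{\mca}(t)F_{\mcb}(t)$; and (iii) a one-letter language $\{s\}$ with $s\in S$ has generating function $t$, while $\{\epsilon\}$ has generating function $1$.

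Next I would read off each of \eqref{case_1inX_gnot1_series}--\eqref{case_1inX_g1_series} from the corresponding identity of Lemma~\ref{lem:eq}, checking in each case that the displayed decomposition is a disjoint union of unambiguous concatenations --- which is exactly what the proof of Lemma~\ref{lem:eq} supplies: when $1\in X$ a word is shown to be determined by stripping off its unique first letter, and when $1\notin X$ it is factored uniquely via its last passage through $1$. Thus in \eqref{lem:eq:case_1inX_gnot1} the pieces $\{s\}\cdot\mcl_{s^{-1}g,s^{-1}X}$ over $s\in S_i\setminus X$ are pairwise disjoint (distinct first letters), each such concatenation is unambiguous, and the leading term $\chi(g\in S_i\cap X)\{g\}$ is disjoint from all of them since $g\in X$ but $s\notin X$; rules (i)--(iii) together with the fact that $\{g\}$ is a one-letter language then give \eqref{case_1inX_gnot1_series}. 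Identity \eqref{lem:eq:case_1notinX_gnot1} yields \eqref{case_1notinX_gnot1_series} because the last-passage factorization makes $\mcl_{1,X}\cdot\mcl_{g,\unone{X}}$ unambiguous. For \eqref{lem:eq:case_1notinX_g1}, the word $\epsilon$ is disjoint from the concatenation $\mcl_{1,X}\cdot(\mcl_{1,\unone{X}}\setminus\epsilon)$ whose members are all nonempty, that concatenation is unambiguous for the same reason, and $\mcl_{1,\unone{X}}\setminus\epsilon$ has generating function $F_{1,\unone{X}}(t)-1$ since exactly the empty word is removed; this gives \eqref{case_1notinX_g1_series}. Finally \eqref{lem:eq:case_1inX_g1} is handled like \eqref{lem:eq:case_1inX_gnot1}: $\epsilon$ and the two families $\{s\}\cdot\mcl_{s^{-1},\{s^{-1}\}}$ ($s\in S\setminus S_i$) and $\{s\}\cdot\mcl_{s^{-1},s^{-1}X}$ ($s\in S_i\setminus X$) are pairwise disjoint, each concatenation is unambiguous, and every $\{s\}$ contributes a factor $t$, yielding \eqref{case_1inX_g1_series}.

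There is essentially no obstacle here: the combinatorial content was already discharged in the proof of Lemma~\ref{lem:eq}, so the corollary is pure bookkeeping. The only points deserving a sentence of justification are that the leading terms $\{g\}$ and $\epsilon$ are genuinely disjoint from the remaining summands, so that rule (i) applies to the entire right-hand side, and that deleting the empty word from $\mcl_{1,\unone{X}}$ lowers its generating function by exactly $1$ (rather than by some higher-order term); both are immediate from the definitions.
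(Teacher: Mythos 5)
Your proof is correct and follows essentially the same route as the paper, which disposes of this corollary by simply invoking the standard symbolic-method translation (disjoint union $\mapsto$ sum, unambiguous concatenation $\mapsto$ product, atom $\mapsto t$, $\epsilon\mapsto 1$), citing Flajolet--Sedgewick; you have merely written out that translation and the disjointness/unambiguity checks explicitly, the combinatorial content having already been established in Lemma~\ref{lem:eq}.
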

\begin{proof} This is done via a well known translation, as in~\cite{flajolet}. \end{proof}

\begin{remark}
Although the system in Lemma~\ref{lem:eq} is not \emph{a priori} finite when the groups are not finite, one can easily adapt this construction to handle the case where some of the $G_i$ are infinite cyclic groups with $S_i=\{x,x^{-1}\}$ with $x$ a generator for $G_i$.  The reason for this is that if a word $s_1\cdots s_n$ has some proper prefix equal to $x^i$ with $i>0$ then it has a proper prefix equal to $x^j$ for $0<j<i$; similarly, if it has a proper prefix equal to $x^i$ with $i<0$ then it has a proper prefix equal to $x^j$ for $j<0$ with $j>i$.  Thus we only need to consider $X$ with at most three elements (potentially one positive exponent, one negative exponent, and the identity), since if $x^i$ and $x^j$ are in $X$ with $i>j>0$ then $F_{g,X}=F_{g,X\setminus \{x^i\}}$ and an analogous result holds for negative exponents.  Also, if $i>j>0$ then and $x^j\in X$ then $F_{x^i,X}=0$ and similarly in the negative case.  Thus if one uses these facts and looks at the dependency tree that arises when looking at equations  from Lemma, then we see that in the case that $G_i$ is an infinite cyclic group with generator $x$, we only need to consider $F_{g,X}$ for $g\in G_i$ and $X\subseteq G$ with $g\in \{x^{-1},1,x\}$ and $X\subseteq \{x^{-1},1,x\}$.  See Example~\ref{exam:2} for a case where this is implemented.
\label{rem:infinite}
\end{remark}

%----------------------------------------------------------
\subsection{Examples}
\label{sec:exam}
%---------------------------------------------------------
We now give some examples in which we use the systems above to compute cogrowth series for certain free products.  For the following examples, we generated the system given by the
equations, and applied simplifications. The solvable examples
generally possess an exploitable symmetry, so although in theory one
might have to manipulate $\sum_{i=1}^m  |G_i|2^{|G_i|}$ equations, in
practice there are far fewer.  We incrementally eliminated the
variables\footnote{Specifically, we have used the {\tt eliminate} command of
  Maple 2018.} to determine the algebraic equation satisfied by
$F_{1,\emptyset}$. When listed, the OEIS numbers refer to the Online
Encyclopedia of Integer
Sequences~\cite{oeis}. Table~\ref{tab:examples} in the Appendix summarizes the sequences.
% \begin{enumerate}
% \item[Step 1] For $g\in G_i, X\subseteq G_i$ compute a large number of initial terms of the generating series $F_{g,X}(t)$;\
% \item[Step 2] Use Maple the package {\tt GFun} to ``guess'' algebraic power series that are equal to $F_{g,X}(t)$ for each $g\in G_i$, $X\subseteq G_i$;
% \item[Step 3] Use symbolic computation to show that these ``guessed'' algebraic equations satisfy the system of equations given in Lemma \ref{lem:eq};
% \item[Step 4] Invoke Lemma \ref{lem:comm} to show that the ``guessed'' algebraic power series for $F_{1,\emptyset}(t)$ is the cogrowth series.
% \end{enumerate}

We start with the following infinite family of groups.
\begin{example}
\label{exam:1}
Let $d, m\ge 2$ and let $G=\left(\mathbb{Z}/d\mathbb{Z}\right)^{\star
  m}=\langle x_1~ |~x_1^d=1\rangle \star \cdots \star \langle x_m
~|~x_m^d=1\rangle$ and let $S=\{x_1,\ldots ,x_m\}$.  Then the
generating series, $Z(t)$ for ${\rm CL}(n;G,S)$, is the unique power
series satisfying the equation
\[m^d t^d Z^d = (Z-1)(Z+m-1)^{d-1}\] 
with initial condition $Z(0)=1$.
\end{example}

Below we prove this using the grammar, however in
Section~\ref{sec:AlgSys} we give a second proof as a consequence of a more general result obtained by methods from free probability. 
\begin{proof}
It is straightforward to solve the system from Lemma~\ref{lem:eq} with
an additional remark. We let
\begin{equation}
A:=F_{1,\{x_1\}}(t), B=F_{1,\{1,x_1\}}(t), C=F_{x_1^{-1},\{x_1^{-1}\}}(t).
\end{equation}
Notice that a word $s_1\cdots s_n\in S^n$ that is equal to $x_1^{-1}$ and has no proper prefix equal to $x_1^{-1}$ can be decomposed uniquely in the form
$w_1 x_1 w_2 x_1 \cdots w_{d-1} x_1$, where each $w_i$ is a word in $s_1,\ldots ,s_m$ that is equal to $1$ with no proper prefix equal to $x_1$.  This then gives us the equation
\begin{equation}
\label{eq:A}
t^{d-1} A^{d-1} = C.
\end{equation}
Equation~\eqref{lem:eq:case_1notinX_g1} of Lemma~\ref{lem:eq} gives that 
$A=1+ A(B-1)$; that is, $A=1/(2-B)$.  Finally, by symmetry we can rewrite Equation~\eqref{lem:eq:case_1inX_g1} of Lemma~\ref{lem:eq} as
\begin{equation}
\label{eq:B}
B=1+(m-1)t C.\end{equation}
Using the fact that $A=1/(2-B)$ then gives that
$A=(1-(m-1)tC)^{-1}$ and so Equation (\ref{eq:A}) yields
\begin{equation}
\label{eq:CC}
t^{d-1} = C(1-(m-1)tC)^{d-1}.\end{equation}

Let $W=F_{1,\{1\}}(t)$ and $Z=F_{1,\emptyset}(t)$.  Then $Z$ is the generating series for the cogrowth.
Equation~\eqref{lem:eq:case_1notinX_g1} of Lemma~\ref{lem:eq} gives 
\begin{equation} \label{eq:Z}Z=1+Z(W-1)\end{equation} and Equation~\eqref{lem:eq:case_1inX_g1} of Lemma~\ref{lem:eq}, again using symmetry, gives
\begin{equation}\label{eq:W}
W=1+mt C.
\end{equation}
Using Equations (\ref{eq:Z}) and (\ref{eq:W}), we see $Z=(1-mtC)^{-1}$, so substituting $C=(Z-1)/(mtZ)$ into Equation~\ref{eq:CC} gives
$$mt^{d}Z = (Z-1)(1-(m-1)(Z-1)/mZ)^{d-1},$$ or equivalently
$$m^d t^d Z^d = (Z-1)(Z+m-1)^{d-1},$$ as claimed.  To see uniqueness of the solution once we impose the initial condition $Z(0)=1$, note that if there is a unique polynomial solution of degree $n-1$ to this equation mod $(t^n)$ for $n\ge 1$ then we get a unique polynomial solution of degree $n$ to this equation mod $(t^{n+1})$ by Hensel's lemma and so by induction there is a unique power series solution with this initial condition.
\end{proof}
We discovered the equation in Example \ref{exam:1} by solving the system on Maple for $d$ with $d\le 9$, and symbolic $m$, and we were able to use the Maple package {\tt gfun}~\cite{gfun} to guess the general form of the algebraic equation satisfied by the cogrowth.  This then suggested a method for proving this fact. We are also able to determine dominant singularity of the cogrowth generating functions appearing in Example \ref{exam:1}. 
\begin{lemma}
Let $\beta>0$ and let 
\[P(z)=m^d \beta^d z^d - (z-1)(z+m-1)^{d-1}.\] Then $P(z)$ has a repeated root if and only if $\beta= (d-1)^{(d-1)/d}/(d(m-1)^{1/d})$.
\end{lemma}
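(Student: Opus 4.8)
The plan is to analyze when the polynomial
$$P(z)=m^d \beta^d z^d - (z-1)(z+m-1)^{d-1}$$
has a repeated root by requiring that $P$ and $P'$ share a common root. A repeated root $z_0$ must satisfy $P(z_0)=0$ and $P'(z_0)=0$ simultaneously. First I would rule out the obvious potential repeated roots coming from the factored term: $z_0=1$ forces $m^d\beta^d=0$, impossible since $\beta>0$; and $z_0=1-m$ would require $m^d\beta^d(1-m)^d=0$, again impossible. So any repeated root is a genuine root of both $P$ and $P'$ that is not a root of $(z-1)(z+m-1)^{d-1}$ by itself, hence $z_0\notin\{0,1,1-m\}$.

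Next I would eliminate $\beta$ between the two equations. Write $P(z)=m^d\beta^d z^d - Q(z)$ with $Q(z)=(z-1)(z+m-1)^{d-1}$. From $P(z_0)=0$ we get $m^d\beta^d = Q(z_0)/z_0^d$. Differentiating, $P'(z)=d\,m^d\beta^d z^{d-1} - Q'(z)$, and substituting the value of $m^d\beta^d$ gives the condition $d\,Q(z_0)/z_0 = Q'(z_0)$, i.e. $z_0 Q'(z_0) = d\,Q(z_0)$. Now $Q'(z) = (z+m-1)^{d-1} + (d-1)(z-1)(z+m-1)^{d-2} = (z+m-1)^{d-2}\bigl[(z+m-1) + (d-1)(z-1)\bigr] = (z+m-1)^{d-2}\bigl[d z - d + 2(m-1)/... \bigr]$; I would carefully simplify the bracket to $d z + (m-1) - (d-1) = dz + m - d$. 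Wait — recompute: $(z+m-1) + (d-1)(z-1) = z + m - 1 + (d-1)z - (d-1) = dz + m - 1 - (d-1) = dz + m - d$. So $Q'(z) = (z+m-1)^{d-2}(dz+m-d)$. Then the condition $z_0 Q'(z_0) = d Q(z_0)$ becomes
$$z_0(z_0+m-1)^{d-2}(dz_0+m-d) = d(z_0-1)(z_0+m-1)^{d-1}.$$
Since $z_0\neq 1-m$, divide by $(z_0+m-1)^{d-2}$ to get $z_0(dz_0+m-d) = d(z_0-1)(z_0+m-1)$. Expanding both sides: left is $dz_0^2 + (m-d)z_0$; right is $d(z_0^2 + (m-2)z_0 - (m-1)) = dz_0^2 + d(m-2)z_0 - d(m-1)$. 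Cancel $dz_0^2$ and solve the resulting linear equation $(m-d)z_0 = d(m-2)z_0 - d(m-1)$, giving $z_0\bigl[(m-d) - d(m-2)\bigr] = -d(m-1)$, i.e. $z_0\bigl[m - d - dm + 2d\bigr] = -d(m-1)$, i.e. $z_0(m+d-dm) = -d(m-1)$, i.e. $z_0 = \dfrac{d(m-1)}{dm - d - m} = \dfrac{d(m-1)}{(d-1)(m-1) + (d-1)\cdot ... }$; I would double-check this denominator factors as $dm-d-m = d(m-1)-m$, and since $d,m\geq 2$ it is positive, so $z_0$ is a well-defined positive real number.

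Finally I would substitute this unique candidate $z_0$ back into $m^d\beta^d = Q(z_0)/z_0^d$ to solve for $\beta$, verify the result is the claimed value $(d-1)^{(d-1)/d}/(d(m-1)^{1/d})$, and conversely check that with this $\beta$ the point $z_0$ really is a double root (it satisfies $P=P'=0$, and it is not a root of higher multiplicity issues because... actually for the ``if'' direction it suffices that $P(z_0)=P'(z_0)=0$, which holds by construction). I would also note $z_0>0$ and $z_0+m-1>0$ so that $Q(z_0)>0$ and the $d$-th root extraction is legitimate, and that $\beta>0$ matches. The main obstacle is purely computational bookkeeping: correctly simplifying $Q'$, carrying out the elimination of $\beta$, and then verifying that the messy expression $Q(z_0)/z_0^d$ simplifies to exactly $(d-1)^{d-1}/(d^d(m-1))$; there is no conceptual difficulty, but the algebra with the $(d-1)$ and $(m-1)$ factors and the $d$-th powers must be handled with care, ideally by writing $z_0 = \frac{d(m-1)}{dm-d-m}$, then $z_0 - 1 = \frac{d(m-1)-(dm-d-m)}{dm-d-m} = \frac{m}{dm-d-m}$ and $z_0+m-1 = \frac{d(m-1)+(m-1)(dm-d-m)}{dm-d-m}$, and simplifying systematically.
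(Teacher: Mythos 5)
Your proposal is correct and follows essentially the same approach as the paper's proof: eliminate $\beta$ between $P(z_0)=0$ and $P'(z_0)=0$ to obtain the $\beta$-free linear equation $z_0Q'(z_0)=dQ(z_0)$ (which is the paper's $d/z = 1/(z-1)+(d-1)/(z+m-1)$ after clearing denominators), solve for the unique candidate $z_0=d(m-1)/(dm-d-m)$, and substitute back to pin down $\beta$. The remaining algebra you defer works out exactly as you indicate, with $z_0-1=m/(dm-d-m)$ and $z_0+m-1=m(m-1)(d-1)/(dm-d-m)$ yielding $\beta^d=(d-1)^{d-1}/(d^d(m-1))$.
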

\begin{proof}
Suppose that $P(z)$ has a repeated root.  Then the system $P(z)=P'(z)=0$ has at least one solution.  Notice that
$$P'(z)=0 \iff  d m^d \beta^d z^{d-1} = (z+m-1)^{d-1} + (d-1)(z-1) (z+m-1)^{d-2}.$$
It is easy to see that if $P(z)=0$ then $z\neq 0,1,-m+1$.  If $P(z)=0$ then we have
$m^d \beta^d z^d=(z-1)(z+m-1)^{d-1}$, so
dividing the equation
$$d m^d \beta^d z^{d-1} = (z+m-1)^{d-1} + (d-1)(z-1) (z+m-1)^{d-2}$$ on the left by
$m^d\beta^d z^d$ and on the right by $(z-1)(z+m-1)^{d-1}$, we see that if $P(z)$ and $P'(z)$ are both equal to zero, then we must have
$$d/z = 1/(z-1) + (d-1)/(z+m-1).$$  Multiplying by $z(z-1)(z+m-1)$ then gives the equation
$$d(z-1)(z+m-1) = z(z+m-1) + (d-1) z(z-1),$$ which has the unique solution
$z=z_0:=(m-1)d/(dm-d-m)$.

Thus if $P(z)=P'(z)=0$ then we must have $z=z_0$ and so we substitute $z=z_0$ into $P(z)$ and use the fact that $P(z_0)$ must be zero to get
$$m^d \beta^d (m-1)^d/(d-2)^d - (m-d+1)/(d-2) \cdot (m-1)^{d-1}(d-1)^{d-1}/(d-2)^{d-1},$$
which has the solution
$$\beta^d =(d-1)^{d-1}/((m-1)d^d),$$ which has the unique positive solution
$$\beta= (d-1)^{(d-1)/d}/(d(m-1)^{1/d}).$$ Thus we have shown that $P(z)$ has a double root only when $\beta= (d-1)^{(d-1)/d}/(d(m-1)^{1/d})$, and this double root occurs at 
$$z=(m-1)d/(dm-d-m).$$ We also get that $P'(z_0)=P(z_0)=0$ for this value of $\beta$, and so the result follows.
\end{proof}
\begin{corollary}\label{cor:main3} Let $m,d\ge 2$.  The radius of convergence of the cogrowth generating function for  
$$G=\langle x_1~|~x_1^d=1\rangle \star \cdots \star \langle x_m~|~x_m^d=1\rangle\simeq (\Z/d\Z)^{\star m}$$ 
with respect to $S=\{x_1,\ldots ,x_m\}$ is 
$$\displaystyle{{(d-1)^{\frac{d-1}{d}}\over d(m-1)^{\frac{1}{d}}}}.$$

\end{corollary}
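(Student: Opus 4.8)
The plan is to combine the explicit algebraic equation for the cogrowth series $Z(t)$ from Example~\ref{exam:1} with the preceding lemma about when the polynomial $P(z)$ acquires a repeated root, and then invoke standard singularity analysis for algebraic functions. Since $Z(t)$ satisfies $\Lambda(t,Z(t))=0$ where $\Lambda(t,z) = m^d t^d z^d - (z-1)(z+m-1)^{d-1}$, and $Z(t)$ is the unique power series solution with $Z(0)=1$, the function $Z(t)$ is an algebraic function whose dominant singularity must occur either where the leading coefficient in $z$ vanishes (it does not: the coefficient of $z^d$ is $m^d t^d - 1$, whose roots have modulus $1/m$, and we will check this is not dominant) or where the discriminant of $\Lambda$ with respect to $z$ vanishes — that is, at a value $t=\rho$ for which $P_\rho(z) := m^d\rho^d z^d - (z-1)(z+m-1)^{d-1}$ has a repeated root. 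By the lemma, writing $\beta = |t|$, the only positive value of $\beta$ for which $P(z)$ has a repeated root is $\beta_0 := (d-1)^{(d-1)/d}/(d(m-1)^{1/d})$.

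The key steps, in order, are: (1) Recall from Example~\ref{exam:1} that $Z(t)$ is the unique formal power series with $Z(0)=1$ satisfying the polynomial equation, hence is algebraic over $\Q(t)$, and has a positive radius of convergence $\rho$ (it is a power series with nonnegative coefficients — being a counting series — so Pringsheim's theorem applies and $\rho$ is itself a singularity). (2) Argue that $\rho$ is a branch point of the algebraic function: for $|t| < \rho$ the equation $P_t(z)=0$ has a simple root equal to $Z(t)$ (simple because one checks $\partial_z \Lambda \neq 0$ there, as otherwise $Z$ could not be analytically continued), and by the implicit function theorem $Z$ extends analytically past any point where $P_t$ has only simple roots; therefore at $t=\rho$ the root $Z(\rho)$ must collide with another root, i.e.\ $P_\rho$ has a repeated root. (3) Apply the lemma: a repeated root of $P_t$ forces $|t| = \beta_0$, so $\rho = \beta_0 = (d-1)^{(d-1)/d}/(d(m-1)^{1/d})$, provided we also rule out the competing candidate singularities. (4) Rule out $|t| = 1/m$ (where the $z^d$-coefficient vanishes) by checking $1/m > \beta_0$ for all $m,d \ge 2$, equivalently $(d-1)^{d-1} < d^{d-2}(m-1) \cdot$ — more simply, $d(m-1)^{1/d} > m(d-1)^{(d-1)/d}$ after clearing; since $(d-1)^{(d-1)/d} < d-1 < d$ and $(m-1)^{1/d} \ge 1$ for $m\ge 2$, this reduces to a short estimate. (5) Finally confirm that $\beta_0$ is genuinely attained as the radius of convergence and not merely an upper bound: the lemma tells us that at $\beta = \beta_0$ the repeated root occurs at the specific real point $z_0 = (m-1)d/(dm-d-m) > 1$, and one checks that the branch of $Z(t)$ emanating from $Z(0)=1$ reaches exactly this value $z_0$ at $t = \beta_0$ (monotonicity of $Z$ on $[0,\beta_0)$ together with $Z(\beta_0^-) = z_0$), so the singularity is attained there and nowhere sooner.

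The main obstacle is step (5), or more precisely making rigorous the claim that the \emph{particular} branch of the algebraic curve that interpolates the cogrowth series is the one that develops its singularity at $\beta_0$ rather than at some other point where a different pair of branches collides. One clean way to handle this: the cogrowth series has nonnegative coefficients, so on $[0,\rho)$ the function $Z(t)$ is real, positive, and increasing; as $t$ increases toward $\rho$ either $Z(t) \to \infty$ or $Z(t)$ tends to a finite limit $z^*$ at which $\partial_z \Lambda(\rho, z^*) = 0$. The first alternative is impossible here because the equation $m^d t^d z^d = (z-1)(z+m-1)^{d-1}$, with both sides of degree $d$ in $z$ and leading coefficients $m^dt^d$ versus $1$, cannot hold for large $z$ when $t < 1/m$; hence $Z$ stays bounded on $[0, \min(\rho, 1/m))$ and the finite-limit alternative must occur, giving a repeated root at $t = \rho$ and thus $\rho = \beta_0$ by the lemma (having already checked $\beta_0 < 1/m$). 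This turns the whole argument into elementary real analysis of the defining equation plus one invocation of the preceding lemma, avoiding any delicate tracking of complex branches.
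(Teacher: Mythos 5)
Your steps (1)--(3) follow the same route as the paper (candidate singularities are the positive zeros of the leading coefficient $m^dt^d-1$ and of the discriminant, Pringsheim, and the repeated-root lemma), but step (4) contains a genuine error that the rest of your argument leans on: the inequality $\beta_0:=(d-1)^{(d-1)/d}/(d(m-1)^{1/d})<1/m$ is false. It is equivalent to $m^d(d-1)^{d-1}<d^d(m-1)$, which fails for every $(d,m)$ with $d,m\ge 2$; e.g.\ for $d=2$, $m=3$ one has $\beta_0=1/(2\sqrt2)\approx 0.354>1/3$, and in general $\beta_0\ge 1/m$ with equality only at $(d,m)=(2,2)$ (as it must be, since trivially ${\rm CL}(n;G,S)\le m^n$ gives $\rho\ge 1/|S|$). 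Consequently $t=1/m$ is the \emph{nearer} candidate singularity, and it cannot be dismissed by saying it lies beyond $\beta_0$. Your fallback argument in the last paragraph inherits the same flaw: boundedness of $Z$ is only established for $t<1/m$, so the dichotomy ``$Z$ bounded $\Rightarrow$ repeated root at $\rho$'' does not rule out $\rho=1/m$ with $Z(t)\to\infty$ as $t\to (1/m)^-$ (a branch of the curve genuinely does blow up at $t=1/m$, since the $z^d$-coefficient vanishes there; the issue is to show it is not the cogrowth branch).

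This missing exclusion is exactly where the paper brings in group theory rather than elementary analysis of the equation: for $(d,m)\neq(2,2)$ the group $(\Z/d\Z)^{\star m}$ is non-amenable, and the Gray--Kambites strengthening of Kesten's criterion \cite[Corollary 6.6]{Kam} (needed because $S$ is not symmetric when $d>2$) forces the cogrowth growth rate away from $|S|=m$, hence $\rho\neq 1/m$; combined with Pringsheim and the repeated-root lemma this yields $\rho=\beta_0$, and the amenable case $(d,m)=(2,2)$ is treated separately, where $\beta_0=1/m=1/2$. To salvage your elementary approach you would instead have to prove directly that the power-series branch $Z(t)$ stays bounded (indeed analytic) at $t=1/m$, e.g.\ by a local analysis of the curve at that point; ruling out a singularity of the cogrowth series at $1/|S|$ is essentially the content of the Kesten-type criterion, so this step cannot simply be skipped.
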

\begin{proof}
The singularities of an algebraic power series $F(t)$ satisfying a polynomial
equation\\ $\Lambda(t,F(t))=0$ for some polynomial $\Lambda(t,z)\in \mathbb{C}[t,z]$ are in the set $T$, where $T$ is the set of zeros of the leading coefficient of $\Lambda(t,z)$ as a polynomial in $z$ and the zeros of the discriminant of $\Lambda(t,z)$ with respect to $z$ (see Flajolet and Sedgewick \cite[\S7.36]{flajolet}). In the case that $F(t)$ is the cogrowth generating function of $G$ with respect to $S$, we have $F(t)$ is a root of $\Lambda(t,z)$, where \[\Lambda(t,z)=m^dt^{d}z^d -(z-1)(z+m-1)^{d-1},\] which has leading coefficient $m^d t^d - 1$.  We showed that $\Lambda(t,z)$ can only have repeated roots for $t\ge 0$ at 
$t=  (d-1)^{(d-1)/d}/(d(m-1)^{1/d})$.  Thus the only positive singularities of $F(t)$ are in 
$T\cap (0,\infty)= \{(d-1)^{(d-1)/d}/(d(m-1)^{1/d}),1/m\}$.  Since $F(t)$ has nonnegative coefficients, it has a singularity at $t=\rho$, where $\rho>0$ is the radius of convergence.  If $d,m\ge 2$ and $(d,m)\neq (2,2)$, then $G$ is nonamenable, and a strengthening of Kesten's criterion due to Gray and Kambites~\cite[Corollary 6.6]{Kam} gives that the radius of convergence is strictly less than $1/|S|=1/m$.  Thus the radius of convergence is $(d-1)^{(d-1)/d}/(d(m-1)^{1/d})$ for $(d,m)\neq (2,2)$. When $d=m=2$, $G$ is amenable and $(d-1)^{(d-1)/d}/(d(m-1)^{1/d})=1/m$, so the result follows.
\end{proof}
\begin{remark} Kuksov~\cite{kuksov} did this computation when $d=2$ and $d=3$, but did not do the general case.  The case $d=2$ is classical, as it can be interpreted in terms of rooted closed walks of length $2n$ on the infinite rooted $m$-ary tree.  The cases $d=2$ and $d=3$ appear in the OEIS as entries A126869 %A183135
 and A265434, respectively.
\end{remark}
\begin{example} \label{exam:2} Let $G = \langle x~|~x^2=1\rangle \star \langle y\rangle$ and let $S=\{x,y,y^{-1}\}$.  Then the cogrowth series for $G$ with respect to $S$ is equal to $$Z=\frac{1}{2}\cdot \left(1-3\sqrt{1-8t^2}\right)/(1-9t^2).$$
\end{example}
\begin{proof} Using the fact that $F_{y,Y}=F_{y^{-1},Y^{-1}}$, which follows from the obvious symmetry and using the equations in  Lemma~\ref{lem:eq} along with Remark \ref{rem:infinite} about how to apply them in the infinite cyclic case, we get the equations:
\begin{enumerate}
\item $F_{x,\{1,x\}}(t)=t$; 
\item $F_{x,\{x\}}(t)= t F_{1,\{x\}}(t)$;
\item $F_{1,\{x\}}(t) = 1 + F_{1,\{x\}}(t) (F_{1,\{1,x\}}(t) -1)$;
%\item $F_{1,\{1,x\}}=1+2tF_{y,y}$;
\item $F_{1,\{1,x\}}(t) = 1+ 2tF_{y,\{y\}}(t)$;
\item $F_{1,\{y\}}(t) = 1+F_{1,\{y\}}(t)(F_{1,\{1,y\}}(t)-1)$;
\item $F_{y,\{y\}}(t)=F_{1,\{y\}}(t) F_{y,\{1,y\}}(t)$;
\item $F_{y,\{1,y\}}(t)  = t$;
\item $F_{1,\{1,y\}}(t) = 1 + tF_{x,\{x\}}(t) + t F_{y, \{y\}}(t)$;
%\item $F_{y,\{y^{-1},1\}}(t) = t F_{1,\{y^{-1}\}}(t) = t F_{1,\{y\}}(t)$;
\item $F_{1,\{1\}}(t)=1+tF_{x,\{x\}}+2tF_{y,\{y\}}(t)$;
\item $F_{1,\emptyset}(t)=1 + F_{1,\emptyset}(t)(F_{1,\{1\}}(t)-1)$.
\end{enumerate}
  \end{proof}
Then we solve this equation using Maple and find that $Z=F_{1,\emptyset}(t)$ satisfies the polynomial equation
$$(3t-1)(3t+1)(t-1)(t+1)Z^3+(-10t^2+2)Z^2+(2t^2-1)Z-2=0,$$
which factors as
$$((9t^2-1) Z^2 -Z+2)((t^2-1)Z-1)=0.$$
Now $Z$ is a power series whose initial terms are $1+3t^2+\cdots$ and so we see that it is a root of the first factor, which we can solve: \[Z=\frac{1}{2}\cdot \left(1-3\sqrt{1-8t^2}\right)/(1-9t^2).\] The dominant singularity comes from the branch cut, and the radius of convergence is $1/(2\sqrt{2})$.  
\begin{remark}\label{remark:cogrowth_equiv} 
We note that the cogrowth series for $\mathbb{Z}/2\mathbb{Z}\star \mathbb{Z}$ given above is the same as the series for $d=2$, $m=3$ in Example \ref{exam:1}; that is, for the free product of three copies of the cyclic group of order $2$.  The reason for this can be seen by the fact that if $u,v$ are elements of order two that generate an infinite dihedral group and if $x,x^{-1}$ generate an infinite cyclic group, then if we let $f_i(u)=x$, $f_i(v)=x^{-1}$ for $i$ odd and let $f_i(u)=x^{-1}$ and $f_i(u)=x$ for $i$ even then we have a map $g:\{u,v\}^*\to \{x,x^{-1}\}^*$ given by
$g(a_1\cdots a_n)=f_1(a_1)f_2(a_2)\cdots f_n(a_n)$ for $a_1,\ldots
,a_n\in\{u,v\}$ and a straightforward induction shows that this map
gives that these two groups have the same cogrowth.

We note that this bijective argument holds more generally: the group $\mathbb{Z}\star H$
  and the group $(\mathbb{Z}/2\mathbb{Z})^{\star 2} \star H$ have the same cogrowth, for any finitely
  generated $H$: Suppose $\{x,x^{-1}\}$ are the generators for
  $\mathbb{Z}$, $u$ and $v$ are order 2 elements generating
  $\mathbb{Z}/2\mathbb{Z}$ and $T$ generates $H$, then if we have a
  word on $\{u,v\} \cup T$, $1= \mu_1 \tau_1 \mu_2 \tau_2 \dots
  \mu_r \tau_r$, with the $\tau_i$ a word on $T$ and $\mu_i$ a word in
  the $u$, $v$. We can define $f_i$ are above, and apply it to the the associated
  word $\mu_1\mu_2\cdots \mu_r$ to get $\mu_1' \mu_2'\cdots \mu_r'$ in $x$ and
  $x^{-1}$. We interlace this
  with the $\tau_i$ to create $\mu'_1 \tau_1 \mu'_2 \tau_2 \dots
  \mu'_r \tau_r$, the image of the word in $(\mathbb{Z}/2\mathbb{Z})^{\star 2} \star H$. 
\end{remark}

\begin{example} \label{exam:3}
Let $G=\mathbb{Z}/2\mathbb{Z}\star \mathbb{Z}/n\mathbb{Z}=\langle x ~|~x^2=1\rangle \star \langle y ~|~y^n=1\rangle$ and let $S=\{x,y\}$ and let $F(t)$ denote the cogrowth generating series for $G$ with respect to $S$.  Then $$F(t)= (1-tD)/((1-tD)^2-t^2),$$ where $D$ is the unique power series solution to the equation
$$t^{n-1}(1-tD)^{n-1} = (1-tD-t^2)^{n-1} D$$ whose expansion begins $t^{n-1}+{\rm higher~degree~terms}$.
\end{example}
\begin{proof}
We let
\begin{equation}
A_1:=F_{1,\{x\}}(t), B_1=F_{1,\{1,x\}}(t), C=F_{x^{-1},\{x^{-1}\}}(t)
\end{equation}
and
\begin{equation}
A_2:=F_{1,\{y\}}(t), B_2=F_{1,\{1,y\}}(t), D=F_{y^{-1},\{y^{-1}\}}(t).
\end{equation}

Notice that a word $s_1\cdots s_n\in S^n$ that is equal to $x^{-1}$ and has no proper prefix equal to $x^{-1}$ can be decomposed uniquely in the form
$w_1 x$, where each $w_i$ is a word in $s_1,\ldots ,s_m$ that is equal to $1$ with no proper prefix equal to $x$.  This then gives us the equation
\begin{equation}
\label{eq:AA}
t A_1 = C.
\end{equation}
Similarly, we have
\begin{equation}
\label{eq:AAA}
t^{n-1} A_2^{n-1} = D.
\end{equation}
We now make use of Lemma \ref{lem:eq} \eqref{lem:eq:case_1inX_gnot1}--\eqref{lem:eq:case_1inX_g1}.
Equation~\eqref{lem:eq:case_1notinX_g1} gives us that 
$A_1=1/(2-B_1)$ and $A_2=1/(2-B_2)$.  Equation~\eqref{lem:eq:case_1inX_g1} gives
\begin{equation}
\label{eq:BBB}
B_1=1+t D\qquad {\rm and}\qquad B_2=1+tC.\end{equation}
Combining these equations then gives that
$t =(1-tD) C$ and we have
$t^{n-1} = (1-tC)^{n-1} D$.
Thus we see that D is a solution to the equation
$$t^{n-1}(1-tD)^{n-1} = (1-tD-t^2)^{n-1} D.$$

Finally, Equation~\eqref{lem:eq:case_1inX_g1} gives $F_{1,\{1\}}(t) = 1+ t (C+D)$ and Equation~\eqref{lem:eq:case_1notinX_g1} gives $F(t)=F_{1,\emptyset}(t)=1/(1-tC-tD)$,
and so if we use the fact that $C=t/(1-tD)$, we see that
$$F(t)=(1-tD)/((1-tD)^2-t^2) = \frac{1}{2}\left(1/(1-t-tD) + 1/(1+t-tD)\right).$$
Uniqueness of $D$ after imposing the initial conditions follows from a standard application of Hensel's lemma.

For $n=3,4,5$ we get the following expressions for the minimal polynomial of $F(t)$ using Maple.  (Note that the case $n=2$ is done is the case $m=d=2$ in Example \ref{exam:1}.)
\begin{description}
\item[$n=3$]
\[((t-1)^3+t^3)((t+1)^3-t^3)Z^3+(t^5-t^4+t^3+2t^2-1)Z^2+(t^3-t^2+1)Z+1\]
%\[ (6t^5-3t^4+2t^3+3t^2-1)F(t)^3+(t^5-t^4+t^3+2t^2-1)F(t)^2+(t^3-t^2+1)F(t)+1=0;\]
\item[$n=4$]
\[((t^4-(t-1)^4)((t+1)^4-t^4)Z^4+2(4t^6-2t^4+3t^2-1)Z^3+t^4(t^2+3)Z^2+(t^4-2t^2+2)Z+1\]
%\[(16t^6-4t^4+4t^2-1)F(t)^4+(8t^6-4t^4+6t^2-2)F(t)^3+(t^6+3t^4)F(t)^2+(t^4-2t^2+2)F(t)+1\]
\item[$n=5$]
\[((t-1)^5+t^5)((t+1)^5-t^5)\,Z^5
+A\,Z^4 +B\,Z^3+C\,Z^2
+Z+1.\]
\end{description}
Here
\[A=3(t^9-t^8+6t^7+4t^6+t^5-6t^4+4t^2-1)\qquad B=2(4t^7+t^6+3t^5-3t^4+3t^2-1)\]
\[C=(t^7+4t^5+2t^4-4t^2+2) \qquad D=(t^5-3t^2+3).\]

%\[A_0F(t)^5+B_0F(t)^4+C_0F(t)^3+D_0F(t)^2+E_0F(t)+1=0,\]
% where $$A_0=10t^9-5t^8+20t^7+10t^6+2t^5-10t^4+5t^2-1, B_0= 3t^9-3t^8+18t^7+12t^6+3t^5-18t^4+12t^2-3,$$
% $$C_0=8t^7+2t^6+6t^5-6t^4+6t^2-2, D_0=t^7+4t^5+2t^4-4t^2+2, E_0=t^5-3t^2+3.$$
We computed the minimal polynomials for some higher $n$ too, but the expressions became increasingly unwieldy and we could not discern any obvious patterns governing the coefficients of the annihilating polynomials. One exception is the leading term which is predicted to be:

\begin{equation}
-\left((t+1)^n-t^n\right)\left((1-t)^n-t^n\right) Z^n. \end{equation}

\end{proof}
The case when $n=3$ was previously worked out by Alkauskas~\cite{alkauskas} and our formula appears in his Theorem 1. It corresponds to the cogrowth of ${\rm PSL}_2(\mathbb{Z})$ as a semigroup generated by two elements, one of order $2$ and another of order $3$.  Again, we apply standard techniques to this algebraic equation to deduce that the singularities of $F$ are contained in the set of zeros of the leading coefficient and the discriminant. For the radius of convergence, we are only interested in real singularities in the range $[1/2,\infty)$. In the case of zeros of the leading coefficient in the interval $[1/2,\infty)$, we need $((t-1)^3+t^3)=0$; that is, $t=1/2$. In the case of zeros of the discriminant, we compute the discriminant and find that it is
{\small
\[ \left( t^{13}-8t^{12}-4t^{11}+164t^{10}-392t^{9}+ 404{t}^{8}-752{t}^{7}+260{t}^{6}-512{t}^{5}-128{t}^{4}-160t^{3}-64{t}^{2}+64 \right) {t}^{3},\]}
which has a unique solution in $[1/2,\infty)$ that we numerically estimate to be~$.5072330945\ldots$. The radius of convergence is one of these two values. Since the cogrowth function is bounded above by $2^n$ and since $F(t)$ has a singularity at its radius of convergence by Pringsheim's theorem.  We again invoke~\cite[Corollary 6.6]{Kam} to get that the radius of convergence is strictly greater than $1/2$, since ${\rm PSL}_2(\mathbb{Z})$ is non-amenable, and so the radius of convergence is $0.50723\cdots$. For similar reasons, for general $n$ we predict the radius of convergence will be a zero of the discriminant, and not of the leading coefficient. 

% \hl{Maybe put this algorithm somewhere here?}
% \hl{
% \begin{algorithm}[H]
% \caption{polynomial elimination via resultants over an integral domain $B$}\label{polyn_elim_alg}
 % \textbf{Input:} $n\in \Z_{>0}$; $t,z_1,\hdots,z_n$ indeterminate ordered as given; $\vecP\in B[t,z_1,\hdots,z_n]^{n}$.\\
% \textbf{Assumption:} There are algebraic functions $F_1(t),\hdots, F_n(t)$, all nonzero, suct that $\vecP(t,F_1(t),\hdots,F_n(t))=0$.\\
% \textbf{Purpose:} Find $P_f(t,z)\in B[t,z], \ P_f\not \equiv 0$ so that for any sequence of nonzero algebraic functions, $F_1(t),\hdots, F_n(t)$, it holds that $\vec{P}(t,F_1(t),\hdots,F_n(t))=0\implies P_f(t,F_1(t))=0.$\\
% \begin{algorithmic}[1]
% \State $\vecP^{(0)}:=\vecP$.
% \For {$k=1,2,\hdots,n-1$}
% \For {$i=1,2,\hdots,n-k$}
% \State $P_i^{(k)}=\ResBar_B(P_i^{(k-1)},P_{n-k+1}^{(k-1)}, z_{n-k+1})\in B[t,z_1,\hdots,z_{n-k}]$
% \EndFor
% \State  $\vecP^{(k)}:=(P_i^{(k)})_{i=1}^{n-k}$
% \EndFor
% \State \Return $P_f(t,z):=P_1^{(n-1)}(t,z)\in B[t,z]$
% \end{algorithmic}
% \end{algorithm}
% }
%Remark, if $$G_1=\langle x_1~|~x_1^2=1\rangle \star \langle x_2~|~x_2^2=1\rangle \star \langle x_3~|~x_3^2=1\rangle$$ and $S_1=\{x_1,x_2,x_3\}$ and if 
%$G_2=\langle x~x^2=1\rangle\star \langle y\rangle$ and $S_2=\{x,y,y^{-1}\}$ then
%$${\rm CL}(n;G_1,S_1)={\rm CL}(n;G_2,S_2)$$ for all $n$.  
We can now prove Theorem \ref{thm:main2}
\begin{proof}[Proof of Theorem \ref{thm:main2}] Part (a) follows from Example \ref{exam:1} and Corollary \ref{cor:main3}.  For part (b), observe that Remark \ref{remark:cogrowth_equiv} shows that 
$\left(\mathbb{Z}/2\mathbb{Z}\right)^{\star m} \star \left(\mathbb{Z}\right)^{\star s}$ has the same cogrowth as $\left(\mathbb{Z}/2\mathbb{Z}\right)^{\star(m+2s)}$, and so the result follows from (a). Part (c) follows from Example \ref{exam:3}.
\end{proof}

As motivated by Remark \ref{remark:cogrowth_equiv}, it would be interesting to know whether one can find all pairs of non-isomorphic groups with symmetric generating sets whose cogrowth series are the same.

%---------------------------------------------------------	
\section{Bounding the degree of the minimal polynomial}
\label{sec:bounds}
%---------------------------------------------------------
In this section, we seek upper bounds for the degree of the minimal
polynomials of various cogrowth generating functions. With knowledge
of such upper bounds, along with sufficiently many terms in the series,
we can apply Pade approximant techniques~\cite{aecf} and undetermined
coefficient methods to obtain the precise minimal polynomial.

\subsection{A bound for free products of finite groups}
\label{sec_gen_finite_gps}
% ---------------------------------------------------------
Finite groups have rational cogrowth generating functions, for which
the degree of the numerator and denominator can be effectively bounded
in terms of the degrees of their irreducible representations.  Since the free product of groups is the coproduct in the category of groups, we will use $\coprod_i G_i$ to denote the free product of groups.  We will also use 
$G^{\star m}$ to denote $\coprod_{i=1}^m G$.
% In these notes, we adopt the following notation. 

We first set up some notation. Let $G_1,\ldots, G_r$ be finite groups with generating sets $S_1,\ldots ,S_r$ respectively, and let $m_1,\ldots ,m_r$ be positive integers. Consider the free product, 
$$G:=\coprod_{i=1}^r G_i^{\star m_i}.$$ 
For $i=1,\hdots,r$, let $S_i^{(1)},\ldots , S_i^{(m_i)}$ denote copies of $S_i$ in the respective copies of $G_i$ used in the formation of $G$. We take
\begin{equation}
S:=\bigcup_{i=1}^r \bigcup_{j=1}^{m_i} S_i^{(j)}\end{equation}

to be our generating set for $G$. We now adopt some notation from free probability. For each $i$, let $\alpha_i = \sum_{s\in S_i} s \in \mathbb{C}[G_i]$. For the group $G$ defined above, we let $\phi: \mathbb{C}[G]\to \mathbb{C}$ be the \emph{expectation operator}: the map that extracts the coefficient of $1_G$ from an element of the group algebra. Furthermore, for each $\beta\in \C[G]$, we consider its Cauchy Transform 
$$\CT_{\beta}(t):=\sum_{n\ge 0}\phi(\beta^n)t^{-n-1}\in \mathbb{C}[[t^{-1}]].$$ 
Observe that each element of $\C[G_i]$ can be canonically identified with an element of $\C[G]$ using any one of the $m_i$ copies of $G_i$ in $G$. Moreover, the value of the operator $\phi$ applied to an element of the group algebra $\C[G_i]$ is independent of the choice of embedding into $G$.  Hence, we may define $\CT_{\rho}(t)$ for $\rho\in \C[G_i]$ without any confusion. We let $\invCT_{\beta}(t)$ denote the compositional inverse of $\CT_{\beta}(t)$. Since we are working with power series in $t^{-1}$ this is saying
\begin{equation}\label{eq:inv}
\CT_{\beta}\circ \invCT_{\beta}(t) = \invCT_{\beta}\circ \CT_{\beta}(t) = t^{-1}.
\end{equation}

Finally, let $$\alpha = \sum_{i=1}^r\sum_{j=1}^{m_i} \sum_{s\in S_i^{(j)}} s\in \mathbb{C}[G].$$
Notice that by our choice of $\alpha$, 
\begin{equation}\label{eq:11}
\CT_{\alpha}(t)=\inv{t}F_{G;S}(\inv{t}).
\end{equation}
%\begin{itemize}
%\item We let $G_1,\ldots, G_r$ be finite groups.
%\item We let $m_1,\ldots ,m_r$ be positive integers.
%\item We let $S_1,\ldots ,S_r$ be generating sets for $G_1,\ldots, G_r$ respectively. Also, let $$G:=\coprod_{i=1}^r G_i^{\star m_i}.$$ 
%\item We let $S\subseteq G$ be the generating set formed by taking the union of $m_i$ copies of $S_i$ (one for each copy of $G_i$ used in the formation of $G$), and we let $S_i^{(1)},\ldots , S_i^{(m_i)}$ denote the generating sets in copies of $G_i$. In particular, $$S=\bigcup_{i=1}^r \bigcup_{j=1}^{m_i} S_i^{(j)}.$$
%\item We let $\alpha_i = \sum_{s\in S_i} s \in \mathbb{C}[G_i]$ for $i=1,\dots, r$.
%\item For the group $G$ defined above, we let $\phi: \mathbb{C}[G]\to \mathbb{C}$ be the expectation operator: The map that extracts the coefficient of $1_G$. Furthermore, for each $\beta\in \C[G]$, we consider its Cauchy Transform 
%$$\CT_{\beta}(t):=\sum_{n\ge 0}\phi(\beta^n)t^{-n-1}.$$ 
%Observe that each element of $\C[G_i]$ can be canonically identified with an element of $\C[G]$ using any one of the $m_i$ copies of $G_i$ in $G$. However, the operator, $\phi$, is independent of the chosen copy of $G_i$. Hence, we may refer to $\CT_{\rho}(t)$ for $\rho\in G_i$, without any confusion. Finally, we denote $\invCT_{\beta}(t)$ as the compositional inverse of $\CT_{\beta}(t)$.
%\item  We let $$\alpha = \sum_{i=1}^r\sum_{j=1}^{m_i} \sum_{s\in S_i^{(j)}} s.$$
%Notice that $\CT_{\alpha}(t)=\inv{t}F_{G;S}(\inv{t})$ can be expressed in terms of the relevant cogrowth series.
%\end{itemize}
 Recall that each 
$F_{G_i;S_i}(t)$ is a rational function with integer coefficients. We
can deduce further results regarding the degrees of the numerator and the denominator. \begin{lemma}\label{lem:bound}
Let $H$ be a finite group with $d$ (inequivalent) irreducible representations with degrees given by $n_1,\ldots ,n_d$ respectively, let $T$ be a generating set for $H$, and let $F(t):=F_{H;T}(t)=\sum_{n\ge 0} \phi(\alpha^n) t^n$, where $\alpha=\sum_{s\in T} s\in \mathbb{C}[H]$, and $\phi$ is the expectation operator.
Then $F(t)$ is the power series expansion of a rational function $P(t)/Q(t)$ where $P,Q\in \mathbb{Z}[t]$ are polynomials with $Q(0)=1$ and the degrees of $P$ and $Q$ are bounded by respectively $n_1+\cdots + n_d-1$ and $n_1+\cdots + n_d$.
In particular, the degrees of $P$ and $Q$ are at most respectively $|H|-1$ and $|H|$ and equality can only occur if $H$ is abelian.
\end{lemma}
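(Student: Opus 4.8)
The plan is to realize the coefficient sequence $a_n:=\phi(\alpha^n)$ as a diagonal entry of the powers of a single integer matrix, and then to bound the degree of that matrix's minimal polynomial using the Wedderburn decomposition of $\C[H]$. Since $\alpha=\sum_{s\in T}s$ lies in the integral group ring $\Z[H]$, left multiplication by $\alpha$ on the free $\Z$-module $\Z[H]$ (with the elements of $H$ as basis) is given by an integer matrix $M$ of size $|H|$, and $M^n$ is the matrix of left multiplication by $\alpha^n$. Hence $a_n$, which is the coefficient of $1_H$ in $\alpha^n\cdot 1_H=\alpha^n$, equals the diagonal entry $(M^n)_{1_H,1_H}$. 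So it suffices to exhibit a monic polynomial $\mu(\lambda)\in\Z[\lambda]$ of degree at most $n_1+\cdots+n_d$ that annihilates $M$: then $a_n$ satisfies the associated linear recurrence, and the conclusion follows from the standard dictionary between constant-coefficient linear recurrences and rational generating functions.

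Next I would pass to $\C[H]$ and use the Wedderburn isomorphism $\C[H]\cong\bigoplus_{i=1}^d M_{n_i}(\C)$, under which $\alpha$ corresponds to the tuple $(\rho_1(\alpha),\ldots,\rho_d(\alpha))$ with $\rho_i(\alpha)\in M_{n_i}(\C)$, and left multiplication by $\alpha$ decomposes as the direct sum of the operators ``left multiplication by $\rho_i(\alpha)$ on $M_{n_i}(\C)$''. For any square matrix $A$ and polynomial $p$, left multiplication by $p(A)$ equals $p$ applied to left multiplication by $A$, so the Cayley--Hamilton theorem applied to each $\rho_i(\alpha)$ shows that $\Phi(\lambda):=\prod_{i=1}^d\det(\lambda I_{n_i}-\rho_i(\alpha))$ annihilates left multiplication by $\alpha$, hence annihilates $M$. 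Thus the minimal polynomial $\mu_M$ of $M$ divides $\Phi$, giving $\deg\mu_M\le n_1+\cdots+n_d$; and since $\mu_M$ is monic and divides the monic integer polynomial $\det(\lambda I-M)$ in $\Q[\lambda]$, Gauss's lemma gives $\mu_M\in\Z[\lambda]$.

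Writing $\mu_M(\lambda)=\lambda^k+c_{k-1}\lambda^{k-1}+\cdots+c_0$ with $k=\deg\mu_M\le\sum n_i$ and $c_j\in\Z$, the identity $M^n\mu_M(M)=0$ yields $a_{n+k}+c_{k-1}a_{n+k-1}+\cdots+c_0a_n=0$ for all $n\ge 0$. Hence $Q(t)F(t)=P(t)$, where $Q(t):=t^k\mu_M(1/t)=1+c_{k-1}t+\cdots+c_0t^k\in\Z[t]$ satisfies $Q(0)=1$ and $\deg Q=k$, and $P(t):=Q(t)F(t)$ is a polynomial of degree at most $k-1$ because the recurrence kills the coefficient of $t^m$ in $QF$ for every $m\ge k$; since $Q\in\Z[t]$ and $F\in\Z[[t]]$ we get $P\in\Z[t]$. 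This establishes $\deg P\le\sum n_i-1$ and $\deg Q\le\sum n_i$. Finally, $\sum_{i=1}^d n_i\le\sum_{i=1}^d n_i^2=|H|$, with equality exactly when every $n_i=1$, i.e.\ when $H$ is abelian; so the degrees of $P$ and $Q$ are at most $|H|-1$ and $|H|$, and attaining either of these bounds forces $\sum n_i=|H|$ and hence $H$ abelian.

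I expect the only delicate point to be the middle step: producing an annihilating polynomial of $M$ of degree $\sum n_i$ rather than the naive degree $|H|$ coming from $\det(\lambda I-M)=\prod_i\det(\lambda I_{n_i}-\rho_i(\alpha))^{n_i}$. This is precisely where the representation-theoretic input is used; the reduction to a single integer matrix and the translation of the linear recurrence into a rational function are routine.
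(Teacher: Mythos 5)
Your proof is correct and follows essentially the same route as the paper: both use the Wedderburn decomposition of the group algebra together with the Cayley--Hamilton theorem to produce a linear recurrence of length at most $n_1+\cdots+n_d$ for the sequence $\phi(\alpha^n)$, translate this into the rational form $P(t)/Q(t)$ with the stated degree bounds, and conclude with $\sum_i n_i \le \sum_i n_i^2 = |H|$, with equality forcing $H$ abelian. The one (minor, and arguably cleaner) divergence is in how integrality is obtained: you work with the integer matrix of left multiplication on $\Z[H]$ and apply Gauss's lemma to its minimal polynomial, whereas the paper works over $\overline{\mathbb{Q}}$ and deduces $P,Q\in\Z[t]$ via Galois invariance together with an algebraic-integer argument.
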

\begin{proof} 
By the Artin-Wedderburn theorem~\cite{dummit} and Maschke's theorem, we have a $\overline{\mathbb{Q}}$-algebra isomorphism $$\Psi: \overline{\mathbb{Q}}[H] \to \Mat_{n_1}( \overline{\mathbb{Q}})\times \cdots \times \Mat_{n_d}( \overline{\mathbb{Q}}).$$   Then under this isomorphism $\Psi$, the element $\alpha$ is sent to a $d$-tuple of matrices $(Y_1,\ldots ,Y_d)$.  Observe that $\Psi$ induces a $\overline{\mathbb{Q}}$-algebra isomorphism
between the power series rings 
$ \overline{\mathbb{Q}}[H][[t]]$ and $$\left( \Mat_{n_1}( \overline{\mathbb{Q}})\times \cdots \times \Mat_{n_d}( \overline{\mathbb{Q}})\right)[[t]]$$ and under this isomorphism $\sum \alpha^n t^{n}$ is sent to
the series $$\sum_{n\ge 0} (Y_1^n,\ldots ,Y_d^n) t^{n}.$$  This series satisfies a linear recurrence of length
$n_1+\cdots +n_d$ by the Cayley-Hamilton theorem, and thus the series is the power series expansion in $t$ of a series of the form
$P(t)/Q(t)$ with $P$ and $Q$ polynomials with coefficients in $\overline{\Q}$ and $Q(0)\neq 0$ and $\gcd(P,Q)=1$ and ${\rm deg}(Q)\le \sum n_i$ and ${\rm deg}(P) \le \sum n_i -1$.  By rescaling, we may assume that $Q(0 )=1$.  Since $F(t)$ has integer coefficients, $P/Q$ must be invariant under the action of ${\rm Gal}(\overline{\mathbb{Q}},\mathbb{Q})$ and so since $Q(0)=1$, we see that $P$ and $Q$ have rational coefficients.  
Now notice that $Q(t) F(t) = P(t)$.  We can show that the roots of $Q(t^{-1})$ must be algebraic integers and so $Q(t)$ is an integer polynomial and we can then have $P$ is an integer polynomial, since $P=FQ$ and $F$ and $Q$ have integer coefficients. 

Finally, if $\deg P+1=|H|$ or $\deg Q=|H|$, then we must have $\sum n_i=|H|$.  Since $\sum n_i^2=|H|$, this can only occur when each $n_i=1$, and so $H$ must be abelian.
\end{proof}

We can build equations that are simpler than those that arise from the
direct combinatorial interpretation using free probability. The following equation relates the inverse Cauchy Transforms of ${\alpha}$ and the ${\alpha_i}$ (see \cite[Theorem 12.7]{FPcombs}): 

\begin{equation}\label{eq:free}
\invCT_{\alpha}(t) = \left(\sum_{i=1}^{r} m_i \invCT_{\alpha_i}(t)\right) - \left(m_1+\cdots +m_r-1\right) \inv{t}.
\end{equation}

For $i=1,\ldots ,r$, we let $\Delta_i$ denote the sum of the degrees
of the irreducible representations of $G_i$. 
Let $G_1,\ldots ,G_r$ be finite groups with cogrowth series
     $F_i(t):=P_i(t)/Q_i(t)$ respectively, and let $\Delta_i = \max\{\deg (P_i)+1, \deg(Q_i)\}$.
     %By Lemma \ref{lem:bound} we can bound the degrees of $P_i$ and $Q_i$ by $\Delta_i-1$ and $\Delta_i$ respectively.
 By definition, $\invCT_{\alpha_i}(\inv{t})$ and $tF_i(t)$ are compositional inverses, so
$$\inv{t}=\invCT_{\alpha_i}(t)\cdot P_i(\invCT_{\alpha_i}(t))/Q_i(\invCT_{\alpha_i}(t)).$$  
In particular, $\invCT_{\alpha_i}(t)$ is a root of the polynomial 
\begin{equation}\label{eq:H}
\lambda_i(t,z):=Q_i(z)  - tz P_i(z)\in \overline{\mathbb{Q}}[t,z].
\end{equation}
It follows that $\deg_z(\lambda_i)= \Delta_i$, and $\deg_t \lambda_i=1$, so $\invCT_{\alpha_i}(t)$ is algebraic.%\footnote{If $\invCT_{\alpha_i}$ is a polynomial, it must be linear. Then $\CT_{\alpha_i}$ is a linear polynomial, which is a contradiction.}.  

%Let's make an important observation.  Since $K_{\alpha_i}(t)$ is algebraic, it has finite positive radius of convergence $r_i\in (0,\infty)$.  Since $A_{\alpha_i}(t)$ is a rational function with nonnegative coefficients and infinitely many positive coefficients, if we let $\rho_i$ denote the radius of convergence of $A_{\alpha_i}$ then $tA_{\alpha_i}(t)\to \infty$ as $t\o \rho_i^{-}$ and so there is some smallest $\beta_i\in (0,\rho_i)$ such that $\beta_i A_{\alpha_i}(\beta_i)=r_i$.  Then as $t\to \beta_i^{-}$ 
%$K_{\alpha_i}( tA_{\alpha_i}(t)) \to \beta_i$ and so $K_{\

% dentes and as $t\to r^{-1}$. and $tA_{\alpha_i}(t)$ are compositional inverses, we also have
%$$t = K_{\alpha_i}(tP_i(t)/Q_i(t))=0.$$  

Hence $\invCT_{\alpha_i}(t)$ lies in a field extension $E_i$ of $\overline{\mathbb{Q}}(t)$ of degree at most $\Delta_i$.
Consequently, $\invCT_{\alpha}(t)$ lies in the compositum, $E$, of $E_1,\ldots ,E_s$, which is an extension of degree at most \begin{equation}
\Delta:=\Delta_1\cdots \Delta_r.
\end{equation}

We now bound the height\footnote{If the cogrowth series, $F(t)$, has a minimal polynomial $\eta(t,z)$, We refer to $\deg_t\eta$ and $\deg_z\eta$ as the height and degree of $F(t)$ respectively.} and degree of $F(t):=F_{G;S}(t)$.
Consider the $\overline{\mathbb{Q}}(t)$-vector space $V$ with basis $e_{j_1,\ldots ,j_r}$ with $0\le j_i<\deg_z\lambda_i= \Delta_i$ for $i=1,\ldots ,r$.  Then we have a surjective $\overline{\mathbb{Q}}(t)$-linear map $\Phi$ from $V$ to $E$, given by 
$$\Phi(e_{j_1,\ldots ,j_r}) = \prod_{i=1}^r \invCT_{\alpha_i}(t)^{j_i}$$ for $1\le i\le r$, $0\le j_i< \Delta_i$. 
Notice that by Equation (\ref{eq:H}) we have
\begin{equation}
\label{eq:D'}
\sum_{\ell=0}^{\Delta_i} c_{i,\ell}(t) \invCT_{\alpha_i}(t)^{\ell} = 0,
\end{equation} where each $c_{i,\ell}(t)=[z^{\ell}]\lambda_i$ has degree at most $1$, $c_{i, \Delta_i}\neq 0$, and $c_{i,0}(t)=1$ since $Q_i(0)=1$.  
Notice that multiplication by $\invCT_{\alpha_i}(t)$ induces a $\overline{\mathbb{Q}}(t)$-linear endomorphism $L_i$ of $E$, which using Equation (\ref{eq:D'}) we can lift to an endomorphism $\bar{L}_i$ of $V$ via the rule
\begin{equation}\label{eq:endLiofV}
 \bar{L}_i(e_{j_1,\ldots ,j_s}) = \left\{ \begin{array}{ll} e_{j_1,\ldots ,j_i+1,\ldots ,j_s} & {\rm if}~ j_i < \Delta_i-1, \\
\sum_{\ell=0}^{\Delta_i-1} -c_{i,\ell}(t)c_{i,\Delta_i}(t)^{-1} e_{j_1,\ldots ,j_{i-1}, \ell, j_{i+1},\ldots ,j_s} & {\rm if}~j_i=\Delta_i-1. \end{array} \right. 
\end{equation}
Then by construction, we have $$\Phi\circ \bar{L_i}= L_i\circ \Phi$$ for $i=1,\ldots ,r$.
Now let $L$ denote the linear endomorphism of $E$ induced by multiplication by $\invCT_{\alpha}(t)$.  Then 
$L=  \sum_{i=1}^{r} m_i L_i - (m_1+\cdots +m_r-1)\inv{U}$, where $U:= (h \in E \mapsto t\cdot h\in E)$ denotes multiplication by $t$.
Notice we can lift $U$ to an endomorphism $\bar{U}$ of $V$ by the rule $\bar{U}(e_{j_1,\ldots ,j_r}) = t\cdot e_{j_1,\ldots ,j_r}$ for all $j_1,\ldots ,j_r$ and we obtain $\Phi\circ \bar{U}^n =U^n\circ \Phi$ for each $n\in \Z$.
Let
$\bar{L}:= \sum_{i=1}^{s} m_i \bar{L}_i - (m_1+\cdots +m_r-1) \inv{\bar{U}}$, then an induction argument, using linearity of the maps involved, gives $\Phi\circ C(\bar{L}) = C(L)\circ \Phi$ for every polynomial $C$.
In particular, the minimal polynomial of $L$ divides the characteristic polynomial of $\bar{L}$, since $\Phi$ is surjective.  

Let $Z$ denote the matrix of $\bar{L}$ with respect to the basis $\{e_{j_1,\ldots ,j_r}\}$.  For a fixed column indexed by $e_{j_1,\ldots ,j_r}$, let $\Gamma\subseteq \{1,\ldots ,r\}$ denote the set of $i$ for which $j_i=\Delta_i-1$. Equation~\eqref{eq:endLiofV} implies that the matrix, $Y:=Z+(m_1+\cdots +m_r-1) \inv{t} {\bf I}$ has entries of the form $a(t)/\prod_{i\in \Gamma} c_{i,\Delta_i}(t)$, where ${\rm deg}(a(t)) \le |\Gamma|$. 
Thus, the characteristic polynomial of $Y$ is a monic polynomial in $x$ of degree $\Delta:=\prod \Delta_i$ in which the coefficient of $x^k$ in the characteristic polynomial is a sum of terms that are $(\Delta-k)$-fold products of rational functions of the form $a(t)/\prod_{i\in \Gamma} c_{i,\Delta_i}(t)$, where ${\rm deg}(a(t))\le |\Gamma|$.  In particular, for a fixed $i$, since the number of standard basis elements, $e_{j_1,\ldots ,j_r}$ for which $j_i=\Delta_i-1$ is $\Delta/\Delta_i$, we see that
the coefficients of $\det(x{\bf I}-Y)$ are of the form $b(t)/\prod_{i=1}^d c_{i,\Delta_i}(t)^{\Delta/\Delta_i}$, with ${\rm deg}(b) \le  \sum_i \Delta/\Delta_i$. Considering
$$M(t):=\prod_{i=1}^d c_{i,\Delta_i}(t)^{\Delta/\Delta_i},$$ we deduce that
the characteristic polynomial of $Y$ is expressible as 
$$x^{\Delta} + \sum_{\ell=0}^{\Delta-1} {b_{\ell}(t)\over M(t)} x^{\ell},$$ where each $b_{\ell}(t)$ has degree at most $\sum_i \Delta/\Delta_i$.
Thus, the characteristic polynomial of $Z$ is of the form 
$$(x+(m_1+\cdots +m_r-1) \inv{t})^{\Delta} + \sum_{\ell=0}^{\Delta-1} {b_{\ell}(t)\over M(t)} (x+(m_1+\cdots +m_r-1)\inv{t})^{\ell}.$$
It follows that $R(t,\invCT_{\alpha}(t))=0$, where 
$$R(t,z):=M(t) (tz+(m_1+\cdots +m_r-1))^{\Delta} + \sum_{\ell=0}^{\Delta-1} t^{\Delta-\ell} b_{\ell}(t) (tz+(m_1+\cdots +m_r-1))^{\ell}.$$
Notice that $\deg_t R\le \Delta+\Delta'$, where $\Delta':=\sum_i \Delta/\Delta_i$; and $\deg_z R = \Delta$. By the change of variables $\inv{t}=uF(u)$, we deduce that
$$R(uF(u),\inv{u})=0.$$  Thus $F(t)$ is the root of the polynomial 
$$R_0(t,z):= z^{\Delta}R(tz, z^{-1}).$$ 
%Then $$R_0(u,T) = M(uT)  (u+(m_1+\cdots +m_s-1) uT)^{D'} + \sum_{\ell=0}^{D'-1} b_{\ell}(uT) (u+(m_1+\cdots +m_s-1)uT)^{\ell}$$
 It follows from standard polynomial manipulation, that $\deg_t R_0, \deg_z R_0\le \Delta+\Delta' = \Delta+ \sum_{i=1}^r \Delta/\Delta_i$.
 
 In particular, we have the following theorem.
%----------------------------------------------------
 \begin{theorem}
   \label{thm:generalbound}
   Let $G_1,\ldots ,G_r$ be finite groups with generating set $S_1,\hdots, S_r$ respectively, and cogrowth series $F_{G_i;S_i}(t)=P_i(t)/Q_i(t)$, where $P_i,Q_i\in \Z[t]$ are polynomials with constant term $1$. For each $i=1,2,\hdots, r$, let
   $\Delta_i:=\max\{1+\deg P_i, \deg Q_i\}$.
   %denote the sum of the degrees of the irreducible representations of $G_i$ for $i=1,\ldots ,s$.
 Then the cogrowth series $F(t):=F_{G;S}(t)$ of $G=\coprod G_i^{\star m_i}$, is algebraic and satisfies $\Lambda(t,F(t))=0$, where $\Lambda(t,z)\in \Z[t,z]$ with 
 ${\rm deg}_t(\Lambda)$ and ${\rm deg}_z(\Lambda)$ both at most
 $$\left(\prod_{i=1}^r \Delta_i\right) \left( 1 + \sum_{i=1}^r \frac{1}{\Delta_i} \right).$$
 In particular, the degrees do not depend on $m_1,\ldots ,m_r$ when we choose
 $S$ as above.
 \end{theorem}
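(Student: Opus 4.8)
The plan is to follow the chain of reductions set up in this subsection, converting each statement about Cauchy transforms into an algebraic one while carefully tracking degrees in $t$. First I would invoke Lemma~\ref{lem:bound} to write each $F_{G_i;S_i}(t)=P_i(t)/Q_i(t)$ with $P_i,Q_i\in\Z[t]$, $Q_i(0)=1$, and $\Delta_i=\max\{1+\deg P_i,\deg Q_i\}$ bounded by the sum of the degrees of the irreducible representations of $G_i$. Since $tF_i(t)$ and $\invCT_{\alpha_i}(\inv{t})$ are compositional inverses, $\invCT_{\alpha_i}(t)$ is a root of $\lambda_i(t,z)=Q_i(z)-tzP_i(z)$, which is linear in $t$ and of degree $\Delta_i$ in $z$; hence $\invCT_{\alpha_i}(t)$ generates a field extension $E_i$ of $\overline{\mathbb{Q}}(t)$ of degree at most $\Delta_i$. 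The free‑product formula \eqref{eq:free} then puts $\invCT_{\alpha}(t)$ inside the compositum $E=E_1\cdots E_r$, whose degree over $\overline{\mathbb{Q}}(t)$ is at most $\Delta:=\prod_i\Delta_i$.

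The core of the argument is an explicit model for $E$ and for multiplication by $\invCT_{\alpha}$. I would take $V$ to be the $\overline{\mathbb{Q}}(t)$‑vector space with basis $\{e_{j_1,\ldots,j_r}: 0\le j_i<\Delta_i\}$ and the surjection $\Phi$ sending $e_{j_1,\ldots,j_r}$ to $\prod_i\invCT_{\alpha_i}(t)^{j_i}$. Using the relation \eqref{eq:D'}, multiplication by $\invCT_{\alpha_i}$ lifts through $\Phi$ to the explicit endomorphism $\bar L_i$ of \eqref{eq:endLiofV}, and multiplication by $t^{\pm 1}$ lifts to the scalar operator $\bar U^{\pm 1}$. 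By \eqref{eq:free}, the operator $\bar L:=\sum_i m_i\bar L_i-(m_1+\cdots+m_r-1)\bar U^{-1}$ lifts multiplication by $\invCT_{\alpha}$, so by surjectivity of $\Phi$ the minimal polynomial of $\invCT_{\alpha}$ over $\overline{\mathbb{Q}}(t)$ divides the characteristic polynomial of $\bar L$, which is monic of degree $\Delta$ in the auxiliary variable.

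The delicate part — and the step I expect to be the main obstacle — is the degree bookkeeping in $t$. After shifting by $(m_1+\cdots+m_r-1)\inv{t}$, the matrix $Y$ of the shifted operator has entries of the form $a(t)/\prod_{i\in\Gamma}c_{i,\Delta_i}(t)$ with $\deg a\le|\Gamma|$, where $\Gamma$ records which coordinates of the indexing basis vector are maximal and each $c_{i,\Delta_i}$ is linear. Counting that exactly $\Delta/\Delta_i$ basis vectors have $j_i=\Delta_i-1$ shows the coefficients of $\det(x\mathbf{I}-Y)$ all lie over the common denominator $M(t)=\prod_i c_{i,\Delta_i}(t)^{\Delta/\Delta_i}$, with numerators of degree at most $\Delta':=\sum_i\Delta/\Delta_i$, and $\deg M\le\Delta'$ as well. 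Clearing $M(t)$ and undoing the shift produces a polynomial $R(t,z)$ with $R(t,\invCT_{\alpha}(t))=0$, $\deg_z R=\Delta$ (its $z^{\Delta}$‑coefficient is $M(t)t^{\Delta}\neq 0$, so $R\neq 0$), and $\deg_t R\le\Delta+\Delta'$. Finally, using \eqref{eq:11} I would substitute $\inv{t}=uF(u)$ and homogenize, setting $R_0(t,z):=z^{\Delta}R(tz,z^{-1})$; then $R_0(t,F(t))=0$, the substitution is a routine polynomial manipulation giving $\deg_t R_0,\deg_z R_0\le\Delta+\Delta'=\big(\prod_i\Delta_i\big)\big(1+\sum_i 1/\Delta_i\big)$, and passing from $\overline{\mathbb{Q}}(t)$‑coefficients to a primitive polynomial in $\Z[t,z]$ is harmless by Gauss's lemma (one may alternatively note directly that $F(t)$ has integer coefficients). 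Independence from $m_1,\ldots,m_r$ is immediate since neither $\Delta$ nor $\Delta'$ involves them.
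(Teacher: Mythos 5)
Your proposal follows the paper's own proof essentially step for step: same invocation of Lemma~\ref{lem:bound} and the free‑product relation \eqref{eq:free}, same model $V$ with the surjection $\Phi$ and the lifted operators $\bar L_i$, $\bar U^{\pm1}$, and the identical degree bookkeeping via $M(t)$ and $\Delta'$ leading to $R$ and then $R_0(t,z)=z^{\Delta}R(tz,z^{-1})$. The only genuine addition is the explicit remark that one clears to a primitive integer polynomial at the end, which is a harmless clarification rather than a new idea.
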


Observe that each $\Delta_i$ is at most the degree sum of the irreducible representations of $G_i$. We therefore immediately obtain Theorem~\ref{thm:mainbound}.

%\begin{corollary}
%   \label{cor:sum_irreps_bound}
%   Adopting the notation in Theorem~\ref{thm:generalbound}, let $\Delta_i$ denote the sum of the irreducible representations of $G_i$ for each $i=1,2,\hdots,r$. Then 
%   $${\rm deg}_t(\Lambda), {\rm deg}_z(\Lambda) \le \left(\prod_{i=1}^r \Delta_i\right) \left( 1 + \sum_{i=1}^r \frac{1}{\Delta_i} \right).$$
%\end{corollary}
%
\begin{proof}[Proof of Theorem~\ref{thm:mainbound}]
The real-valued function on $(0,\infty)^r$ given by 
$$(y_1,\hdots,y_r)\mapsto (y_1\hdots y_r) (1+\inv{y_1}+\hdots+\inv{y_r})$$ 
is increasing in each of the $y_i$. Hence, the result follows directly from Lemma~\ref{lem:bound} and Theorem~\ref{thm:generalbound}.
\end{proof}

The following remark shows how one can directly apply Theorem~\ref{thm:mainbound} to determine
explicit algebraic equations satisfied by the cogrowth series.

%\begin{example}[$G=\Sym(5)^{\star m_1}\star \Sym(3)^{\star m_2}$]  The  degrees of irreducible representations are  $1,1,4,4,5,5,6$ for $\Sym(5)$; and $1,1,2$ for $\Sym(3)$. Therefore, we get
% $\Delta_1=26$, $\Delta_2=4$, so our degree and height bounds are given by
 %$104+26+4 = 134$. 
 %\end{example}
  
\begin{remark} If $A(t)\in \mathbb{C}[[t]]$ is a power series that is a solution to $\eta(t,A(t))=0$ where $\eta(t,z)$ is an irreducible polynomial whose degrees in $t$ and $z$ are bounded by $\Delta_t$ and $\Delta_z$ respectively%(we take $\eta$ to be the minimal polynomial, which will satisfy the hypotheses in their paper)
, then Bostan et al.~\cite{bostandeaf} show that $A(t)$ is $D$-finite and that it is annihilated by a differential operator $$\sum_{i=0}^k p_i(t) \partial_t^i$$ with $k\le \Delta_z$ and ${\rm deg}(p_i) \le ((2k-1)\Delta_z +2k^2 -4k+3)\Delta_t -k(k-1)/2$.
By the irreducibility of $\eta$ and Theorem~\ref{thm:mainbound}, we can consider 
$$\Delta_t=\Delta_z=\prod_{i=1}^r \Delta_i \left( 1 + \sum_{i=1}^r 1/\Delta_i \right).$$ 
It follows that the above differential operator has coefficients in $\C[t]$, each of degree at most
$$N:=((2{\Delta_z}-1){\Delta_z} + 2{\Delta_z}^2 -4{\Delta_z}+3){\Delta_z} - {\Delta_z}({\Delta_z}-1)/2 = (8{\Delta_z}^3 - 11{\Delta_z}^2+7{\Delta_z})/2.$$
Therefore, the coefficients $a_n$ of $A(t)$ satisfy a polynomial linear recurrence of the form
$$\sum_{i=0}^{N+{\Delta_z}} q_i(n) a_{n-i} = 0$$ with $q_i(x)$ of degree at most ${\Delta_z}$, and so one can theoretically ``guess-and-prove'' the recurrences with the $q_i$ and sufficient many terms in $\{a_n\}$.  
\end{remark}

%***{\bf REMARK: Can we do this for some non-trivial example? } Notice that if we take $$\mathbb{Z}/2\mathbb{Z}\star S_3$$ then the sum of the degrees of the irreps is $2$ for the first group and $4$ for the other group, so we get a bound of $8\cdot (1+1/2+1/4) = 14$.  Can we apply it here?
%****

\subsection{Determining the cogrowth via methods from free probability}
\label{sec:AlgSys}
In some cases it is straightforward to derive the minimal polynomial
$\Lambda(t,z)$ for the cogrowth series of a virtually free group. The proof of Theorem~\ref{thm:generalbound} gives an
outline of how to do this. In the case of cyclic factors, Liu~\cite{liu} gave a slight improvement to Theorem~\ref{thm:generalbound}. We now illustrate how to compute the
polynomial equation satisfied by the cogrowth series for the group $G=(\mathbb{Z}/d\mathbb{Z})^{\star m}$. This is a different
  approach from that used in the proof of Corollary \ref{cor:main3}, although we obtain the same conclusion. We again let $x_1,\ldots ,x_m$ denote generators for the copies of $\mathbb{Z}/d\mathbb{Z}$ and we let $S=\{x_1,\ldots ,x_m\}$.  

Using the notation in \S3.1, we see by Equation \ref{eq:free} that
$$\invCT_{\alpha}(t) = m \invCT_{\alpha_1}(t) - (m-1)t^{-1},$$ where $\alpha_1=x_1 \in \mathbb{C}[\mathbb{Z}/d\mathbb{Z}]$ and $x_1$ is a generator for $\mathbb{Z}/d\mathbb{Z}$.  
Since $\CT_{\alpha_1}(t) = \inv{t}\cdot 1/(1-t^{-d})$

By Equation (\ref{eq:inv}), we see that
$$t^{-1}  = \invCT_{\alpha_1}(t)^{-1} \cdot 1/(1-\invCT_{\alpha_1}(t)^{-d})$$
In particular, 
$z=\invCT_{\alpha_1}(t)$ is a solution to the equation
$$(z^d-1)  = z^{d-1}t,$$ and since $\invCT_{\alpha_1}=m^{-1} \invCT_{\alpha} + (m-1)/m$, we have
that 
$$(m^{-1} \invCT_{\alpha}(t)+(m-1)m^{-1}t^{-1})^d - 1 = (m^{-1} \invCT_{\alpha}(t)+(m-1)m^{-1}t^{-1})^{d-1}t.$$
We now let $t=\CT_{\alpha}(u)$ and we see
$$(m^{-1} u^{-1} + (m-1)m^{-1}\CT_{\alpha}(u)^{-1})^d -1 = (m^{-1} u^{-1} + (m-1)m^{-1}\CT_{\alpha}(u)^{-1})^{d-1} \CT_{\alpha}(u).$$
Letting $x=u^{-1}$ and using Equation (\ref{eq:11}), we see
$$(m^{-1} x + (m-1)m^{-1}x^{-1} F_{G,S}(x)^{-1})^d - 1 = (m^{-1} x + (m-1) m^{-1} x^{-1} F_{G,S}(x)^{-1})^{d-1} x F_{G,S}(x).$$
 In particular, after simplifying we see that $z=F_{G,S}(t)$ is a solution to $\Lambda(t,z)=0$, where 
 $$\Lambda(t,z) = m^d t^d z^d - (z-1)(z+m-1)^{d-1},$$
which is consistent with the result obtained in Example~\ref{exam:1} via the use of combinatorial grammars.  The language theoretic approach, however, has the added advantage of giving a mechanism for describing the language $\mathcal{L}(G,S)$.  
\section{A gap result for radii of convergence}
\label{sec:gap}
%-----------------------------------------------
In this section we prove Theorem \ref{thm:main3}.  We first prove an elementary estimate.
\begin{lemma}
  Let $\phi: G\to H$ be a group homomorphism and let $S$ be a
  symmetric generating set for $G$.  If the restriction of $\phi$ is
  injective on $S$ then ${\rm CL}(n;G,S)\le {\rm CL}(n;H,\phi(S))$.
\end{lemma}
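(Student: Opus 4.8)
The plan is to compare walks on the Cayley graph $\mathcal{X}(G;S)$ directly with walks on $\mathcal{X}(H;\phi(S))$. A word $w=s_1\cdots s_n$ over the alphabet $S$ determines a word $\phi(w)=\phi(s_1)\cdots\phi(s_n)$ over the alphabet $\phi(S)$, and since $\phi$ is a homomorphism, $w=1_G$ in $G$ implies $\phi(w)=1_H$ in $H$. So the map $w\mapsto \phi(w)$ sends the length-$n$ words counted by ${\rm CL}(n;G,S)$ into the set of length-$n$ words counted by ${\rm CL}(n;H,\phi(S))$. The only thing to check for the inequality is that this map is injective on words of length $n$.

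The injectivity on words is exactly where the hypothesis that $\phi$ is injective on $S$ is used: if $\phi(s_1)\cdots\phi(s_n)$ and $\phi(s_1')\cdots\phi(s_n')$ agree as \emph{words} (letter by letter) over $\phi(S)$, then $\phi(s_i)=\phi(s_i')$ for each $i$, and injectivity of $\phi|_S$ gives $s_i=s_i'$, hence $w=w'$ as words. One small point to be careful about: ${\rm CL}(n;H,\phi(S))$ counts words over the alphabet $\phi(S)$, and a priori the alphabet $\phi(S)$ could be smaller than $S$ if $\phi$ identified some generators — but that is precisely excluded, and with $\phi|_S$ injective the alphabets $S$ and $\phi(S)$ are in bijection, so ``length-$n$ word over $S$ equal to $1_G$'' maps injectively into ``length-$n$ word over $\phi(S)$ equal to $1_H$.'' One should also note $\phi(S)$ is a symmetric generating set for $\phi(G)\le H$; if we only want the inequality we may take $H=\phi(G)$ or simply observe words equal to $1$ in $\phi(G)$ are words equal to $1$ in $H$.

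I do not expect any genuine obstacle here; the statement is essentially a bookkeeping argument. The only thing requiring a moment's thought is keeping straight the distinction between equality of words (which is what we need injectivity for) and equality of the group elements they represent (which is what the homomorphism property preserves). Concretely, I would write: define $\Psi\colon \mathcal{L}(G;S)_n \to \mathcal{L}(H;\phi(S))_n$ by $\Psi(s_1\cdots s_n)=\phi(s_1)\cdots\phi(s_n)$, check it is well defined (homomorphism property), check it is injective (injectivity of $\phi|_S$), and conclude ${\rm CL}(n;G,S)=|\mathcal{L}(G;S)_n|\le |\mathcal{L}(H;\phi(S))_n|={\rm CL}(n;H,\phi(S))$. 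This lemma will then feed into the gap argument by letting us pass to convenient quotients or subgroups of $G$ whose cogrowth radii we can compute from Theorem~\ref{thm:main2}.
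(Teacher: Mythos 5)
Your proof is correct and follows the same approach as the paper's: apply $\phi$ letter by letter to pass from words over $S$ equal to $1_G$ to words over $\phi(S)$ equal to $1_H$, with the homomorphism property giving well-definedness and injectivity of $\phi|_S$ giving injectivity of the word map. The paper states the conclusion is ``immediate'' after the homomorphism observation; you usefully spell out that the injectivity hypothesis is what makes the resulting map on words one-to-one, which is the exact point that turns the observation into the claimed cardinality inequality.
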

\begin{proof}
Observe that if $s_1,\ldots, s_n\in S$ and $s_1\cdots s_n=1$ then $\phi(s_1)\cdots \phi(s_n)=1$ and so the inequality is immediate.
\end{proof}
\begin{proof}[Proof of Theorem \ref{thm:main3}]
Suppose that $H$ is a group with symmetric generating set $S$ and let $s_1,\ldots ,s_p$ be the elements of order $2$ in $S$ and let 
$u_1^{\pm 1},\ldots ,u_q^{\pm 1}$ be the remaining elements of $S$.  Then $p+2q=|S|$.  We claim that if either $p\ge 3$, $q\ge 2$ or $p,q\ge 1$ then $\rho_{H,S}^{-1}\ge 2\sqrt{2}$.  To do this, we deal with a few cases.

\emph{Case I:} $p\ge 3$.
Let $G$ be the free product of $3$ copies of $\mathbb{Z}/2\mathbb{Z}$ with generators $x_1,x_2,x_3$
Then we have a group homomorphism $\phi:G\to H$ sending $x_i\to s_i$ for $i=1,2,3$ and this is injective on
$T:=\{x_1,\ldots ,x_3\}$.  Thus
${\rm CL}(n;G,T)\le {\rm CL}(n;H,S)$ and hence $1/\rho_{G,T} \le  1/\rho_{H,S}$.
By Theorem \ref{thm:main2} (a), with $d=2,m=3$, we have that the cogrowth generating function for $G$ with respect to $T$ is
$$4/\left(1+3\sqrt{1-8t^2}\right),$$ which has radius of convergence ${\left(2\sqrt{2}\right)}^{-1}$ and so $\rho_{H,S}^{-1}\ge 2\sqrt{2}$ in this case. 

\emph{Case II:} $q\ge 2$.
In this case, we let $G$ be the free product of two copies of $\mathbb{Z}$ with generating set $T=\{y_1,y_1^{-1}, y_2,y_2^{-1}\}$.  Then we have a homomorphism $\phi: G\to H$ sending $y_i\to u_i$ for $i=1,2$.  Then we again have
${\rm CL}(n;G,T)\le {\rm CL}(n;H,S)$ and hence $1/\rho_{G,T} \le  1/\rho_{H,S}$. Taking $m=0$ and $s=2$ in Theorem \ref{thm:main2} (b), we have that the cogrowth series of $G$ with respect to $T$ is given by the series $3/(1+2\sqrt{1-12x^2})$ using the work of Chomsky and Sch\"utzenberger~\cite{chomsky} (see also OEIS A035610).  This series has $1/\rho_{H,S}\ge 1/\rho_{G,T} =\sqrt{12}>2\sqrt{2}$ and so we get the result in this case.
\vskip 2mm
\emph{Case III}: $p,q\ge 1$.
In this case, we let $G$ be the free product of $\mathbb{Z}/2\mathbb{Z}$ (with generator $x$) with $\mathbb{Z}$ (with generating set $y,y^{-1}$.  We let $T$ be the symmetric generating set $\{x,y,y^{-1}\}$ and we have a homomorphism from $G\to H$ sending $x$ to $s_1$, $y\mapsto u_1$. Then this is injective on $T$ and sends $T$ into $S$, so
${\rm C}(n;H,S)\ge {\rm CL}(n;G,T)$, and Theorem \ref{thm:main2} (b) gives that the cogrowth generating series for $G$ has radius of convergence $1/2\sqrt{2}$, so we get the result in this case.

We see that it suffices to consider the case when $p\le 2$, $q\le 1$, and $pq=0$, and  hence $(p,q)\in \{(2,0), (1,0), (0,1)\}$.  In this case, we see that $H$ is a homomorphic image of either $D_{\infty}$ or $\mathbb{Z}$, and hence it is amenable and so by Kesten's criterion $\rho_{H,S}^{-1}=|S|\in \{1,2\}$.  The result follows.
\end{proof}
We pose the following question.
\begin{question} Does there exist $\alpha\in [2\sqrt{2},\infty)$ that cannot be realized as $1/\rho_{G,S}$ for some finitely generated group $G$ and finite symmetric generating set $S$?
\end{question}

% %-----------------------------------
% \section{Concluding remarks}
% \label{sec:conclusion}
% %-------------------------------------
% For general free products, it becomes increasingly difficult to compute the cogrowth series. In the case that $G$ is a free product of $m$ groups of orders $d_1,\ldots ,d_m$, and $F(t)$ is the cogrowth generating set for some generating set $S$ from the form given in Lemma~\ref{lem:eq}, it would be interesting to get good upper bounds on the degree of the extension $D_F:=[\mathbb{C}(t,F(t)):\mathbb{C}(t)]$.  We note that upper bounds can be obtained using a theorem of Heintz~\cite{heintz} (see, for example, the remarks before and the proof of Fact 6.4 in \cite{BLMS}).  In particular, since each equation in Lemma~\ref{lem:eq} is of degree $2$ and we have at most $2^{d_1}d_1+\cdots +2^{d_m}d_m$ equations, we get that $D_F\le 2^{2^{d_1}d_1+\cdots +2^{d_m}d_m}$.  This appears to be far from optimal.  In particular, we found that the dependency tree for functions needed to compute the cogrowth was relatively small and that $D_F$, was nowhere near the size of the upper bound provided above.  

%\nocite{*}

\bibliographystyle{plain}
\bibliography{references}
\appendix
\begin{table}
{\tiny
%\begin{adjustbox}{angle=90}

\begin{tabular}{llll}
$G$& $\rho_G$ & OEIS & Initial terms of $CL(n; G, S)$\\\toprule
%Hecke Groups.
$\mathbb{Z}_2\star\mathbb{Z}_2$&$1/2$ &A126869 & $1, 0, 2, 0, 6, 0, 20, 0, 70, 0, 252, 0, 924, 0, 3432, 0, 12870, 0, 48620, 0, 184756$\\ 
%A(x) = 1/sqrt(1 - 4*x^2)
$\mathbb{Z}_3\star \mathbb{Z}_3$&$\frac{{2}^{2/3}}{3}$&A047098&$1, 0, 0, 2, 0, 0, 8, 0, 0, 38, 0, 0, 196, 0, 0, 1062, 0, 0, 5948, 0, 0, 34120$\\
 %a(n) = 2*binomial(3*n, n) - sum(k=0, n, binomial(3*n, k)).$\\
$\mathbb{Z}_4\star \mathbb{Z}_4$ &$\frac{{3}^{3/4}}{4}$&A107026&$1, 0, 0, 0, 2, 0, 0, 0, 10, 0, 0, 0, 62, 0, 0, 0, 426, 0, 0, 0, 3112, 0, 0, 0, 23686$\\
$\mathbb{Z}_5\star \mathbb{Z}_5 $&$\frac{{4}^{4/5}}{5}$ &A304979& $1, 0, 0, 0, 0, 2, 0, 0, 0, 0, 12, 0, 0, 0, 0, 92, 0, 0, 0, 0, 792, 0, 0, 0, 0, 7302$\\[5mm]
$\mathbb{Z}_2\star \mathbb{Z}_3$ &.5072330945 &A265434 & $1, 0, 1, 1, 1, 5, 2, 14, 13, 31, 66, 77, 240, 286, 722, 1226, 2141, 4760, 7268, 16473$\\ 
%G(x) satisfies (6*x^5-3*x^4+2*x^3+3*x^2-1)*G(x)^3+(x^5-x^4+x^3+2*x^2-1)*G(x)^2+(x^3-x^2+1)*G(x)+1 = 0.
$\mathbb{Z}_2\star \mathbb{Z}_4$ &.5171996045&NEW&$1, 0, 1, 0, 2, 0, 7, 0, 22, 0, 66, 0, 209, 0, 687, 0, 2278, 0, 7612, 0$\\
$\mathbb{Z}_2\star \mathbb{Z}_5$ &.5259851993&NEW&$1, 0, 1, 0, 1, 1, 1, 7, 1, 27, 2, 77, 19, 182, 148, 379, 793, 748, 3268, 1729$\\
$\mathbb{Z}_2\star \mathbb{Z}_6$&.5333879707&NEW&$1, 0, 1, 0, 1, 0, 2, 0, 9, 0, 36, 0, 114, 0, 316, 0, 873, 0, 2636, 0$\\
$\mathbb{Z}_2\star \mathbb{Z}_7$&.5396278153&NEW&$1, 0, 1, 0, 1, 0, 1, 1, 1, 9, 1, 44, 1, 156, 2, 450, 25, 1122, 262, 2508, 1851, 5149$\\[5mm]
$\mathbb{Z}_2\star \mathbb{Z}$&$(2\sqrt{2})^{-1}$&A089022& $1, 3, 15, 87, 543, 3543, 23823, 163719, 1143999, 8099511, 57959535, 418441191$\\[5mm]
%
%Symmetric $S$\\
%$\mathbb{Z}_2\times \mathbb{Z}_3 S=\{x_1,x_2, x_2^{-1}\} $&&NEW&1, 0, 3, 2, 15, 20, 89, 168, 591, 1346, 4223, 10648, 31617, 84240, 243883\\%-t+1+(t^3+t^2-t)*f(t)+(-6*t^3+5*t^2+2*t-1)*f(t)^2, ogf
%$\mathbb{Z}_3; S=\{x_1, x_1^{-1}\}$& &A078008 & 1, 0, 2, 2, 6, 10, 22, 42\\%(1-x)/(1 - x - 2*x^2).
%$\mathbb{Z}_4$[1, 0, 2, 0, 8, 0, 32, 0, 128, 0, 512, 0, 2048, 0, 8192, 0]
%$\mathbb{Z}_5$ A054877 [1, 0, 2, 0, 6, 2, 20, 14, 70, 72, 254, 330, 948, 1430, 3614, 6008]
%$\mathbb{Z}_6$[1, 0, 2, 0, 6, 0, 22, 0, 86, 0, 342, 0, 1366, 0, 5462, 0]
%$\mathbb{Z}_7$A094659[1, 0, 2, 0, 6, 0, 20, 2, 70, 18, 252, 110, 924, 572, 3434, 2730]
%$\mathbb{Z}_3\star\mathbb{Z}_3; S=\{x_1^{\pm 1}, x_2^{\pm 1}\}$ & .3695058810 &NEW & 1, 0, 4, 4, 16, 36, 88, 236, 584, 1508\\[5mm]
%
%Symbolic powers \\%% \mathbb{Z}/d\mathbb{Z}\right)^{\star m} ; m^d t^d Z^d = (Z-1)(Z+m-1)^{d-1}
$\mathbb{Z}_2^{\star m}$& $\frac {1}{2\sqrt [2]{m-1}}$& &$1,0,m,0,2\,{m}^{2}-m,0,5\,{m}^{3}-6\,{m}^{2}+2\,m,0,14\,{m}^{4}-28\,{m}^{3}+20\,{m}^{2}-5\,m$\\
$\mathbb{Z}_3^{\star m}$&$\frac {{2}^{2/3}}{3\sqrt [3]{m-1}}$&&$1,0,0,m,0,0,m \left( 3\,m-2 \right) ,0,0,m \left( 12\,{m}^{2}-18\,m+7 \right) ,0,0$\\
$\mathbb{Z}_4^{\star m}$&$\frac {{3}^{3/4}}{4\sqrt [4]{m-1}}$&&$1,0,0,0,m,0,0,0,m \left( 4\,m-3 \right) ,0,0,0,m \left(22\,{m}^{2}-36\,m+15 \right) ,0,0,0$\\
$\mathbb{Z}_5^{\star m}$&$\frac {{4}^{4/5}}{5\sqrt [5]{m-1}}$&&$1,0,0,0,0,m,0,0,0,0,m \left( 5\,m-4 \right) ,0,0,0,0,m \left( 35\,{m}^{2}-60\,m+26 \right) ,0,0,0,0$\\\bottomrule
%S_3=<x,y: 1=x^2=y^3 xyx=y^2> 1,0,2,2,10,23,74,223,681,2196, 6847 NEW
%S_3=<x,y:1=x^2=y^2; xyx=yxy> 1, 4, 28, 236, 2188 NEW
\end{tabular}
\smallskip

}
\caption{\small The examples considered in Section~\ref{sec:exam}. The algebraic equations satisfied by the generating functions are found in that section. Here, we use $\{x\}$ as a generating set for $\mathbb{Z}_n=\mathbb{Z}/n\mathbb{Z}=\gen{ x~|~x^n=1}$, and we use $\{x,\inv{x}\}$ for $\Z=\gen{x}$. If $S_i\subseteq G_i$ is the generating set for $G_i$, the above cogrowth series is given with respect to $S=\cup S_i\subseteq G_1\star \cdots \star G_m$}
\label{tab:examples}
\end{table}
\end{document}